\numberwithin{equation}{section}
\theoremstyle{plain}
\newtheorem{theorem}{Theorem}[section]
\newtheorem{lemma}{Lemma}[section]
 \newtheorem{remark}{Remark}[section]
\def\Om{\Omega}
\def\om{\omega}
\def\g{\gamma}
\def\l{\lambda}
\def\p{\partial}
\def\a{\alpha}
\def\b{\beta}
\def\d{\delta}
\def\L{\Lambda}
\def\z{\zeta}
\def\vp{\varphi}
\def\vr{\varrho}
\def\vt{\vartheta}
\def\vk{\varkappa}
\def\Ho{\mathring{W}_2}
\def\Holoc{\mathring{W}_{2,loc}}
\def\iu{\mathrm{i}}
\def\di{\,d}
\def\Op{\mathcal{H}}
\def\tE{\tilde{E}}
\def\tPsi{\tilde{\Psi}}
\def\la{\langle}
\def\ra{\rangle}
\def\cR{\mathcal{R}}
\def\cP{\mathcal{P}}
\def\cC{\mathcal{C}}
\def\cL{\mathcal{L}}
\def\cT{\mathcal{T}}
\def\cS{\mathcal{S}}
\def\cQ{\mathcal{Q}}
\def\cI{\mathcal{I}}
\def\cF{\mathcal{F}}
\def\cU{\mathcal{U}}
\def\sAo{\mathring{\mathsf{A}}}
\def\cAo{\mathring{\mathrm{A}}}
\def\sC{\mathsf{C}}
\def\sB{\mathsf{B}}
\def\sA{\mathsf{A}}
\def\sE{\mathsf{E}}
\def\H{W_2}
\def\Hloc{W_{2,loc}}
\def\Hoper{\mathring{W}_{2,per}}
\def\mr{\mathfrak{r}}
\def\mW{\mathfrak{W}}
\def\mV{\mathfrak{V}}
\def\mG{\mathfrak{G}}
\DeclareMathOperator{\RE}{Re}
\DeclareMathOperator{\IM}{Im}
\DeclareMathOperator{\spec}{\sigma}
\DeclareMathOperator{\essspec}{\sigma_{e}}
\begin{document}

\title{On finitely many resonances emerging under distant perturbations in multi-dimensional cylinders}

\author
{D.I. Borisov$^1$\footnote{Corresponding author}, A.M. Golovina}

\vskip -0.5 true cm

\date{\empty}

\maketitle

\begin{center}
{\footnotesize $^1$
Institute of Mathematics, Ufa Federal Research Center, Russian Academy of Sciences, Ufa, Russia,
\\
Bashkir State University, Ufa, Russia,
\\
University of Hradec Kr\'alov\'e,  Hradec Kr\'alov\'e, Czech Republic
 \\
{\tt borisovdi@yandex.ru}
\\
\footnotesize $^2$
Bauman Moscow State Technical University
\\[-.3em]
{\tt amgolovina@bmstu.ru}
}
\end{center}

\date{\empty}

\maketitle

\allowdisplaybreaks

\begin{abstract}
We consider a general elliptic operator in an infinite multi-dimensional cylinder with several distant perturbations; this operator is obtained by ``gluing''  several single perturbation operators $\Op^{(k)}$, $k=1,\ldots,n$, at large distances. The coefficients of each  operator $\Op^{(k)}$ are periodic in the outlets of the cylinder; the structure of these periodic parts at different outlets can be different. We consider a point $\l_0
\in\mathds{R}$ in the essential spectrum of the operator with several distant perturbations and assume that this point is not in the essential spectra of middle operators $\Op^{(k)}$, $k=2,\ldots,n-1$, but is an eigenvalue of at least one of $\Op^{(k)}$, $k=1,\ldots,n$. Under such assumption we show that the operator with several distant perturbations possesses finitely many resonances in the vicinity of $\l_0$. We find the leading terms in asymptotics for these resonances, which turn out to be exponentially small. We also conjecture that the made assumption selects the only case, when  the distant perturbations produce finitely many resonances in the vicinity of $\l_0$. Namely, as $\l_0$ is in the essential spectrum of at least one of operators $\Op^{(k)}$, $k=2,\ldots,n-1$, we do expect that infinitely many resonances emerge in the vicinity of $\l_0$.
\\
\\
Keywords: distant perturbation, emerging resonance, exponential asymptotics
\end{abstract}

\section{Introduction}

Resonances of unbounded self-adjoint operators are
intensively studied both in mathematics and physics. In mathematics, the notion of resonance is related with an analytic continuation of the resolvent of a considered operator through the essential spectrum and the resonances are defined as poles of such continuation, see, for instance, monographs \cite{Yaf}, \cite{Zwo}.

Behavior of resonances is studied in various aspects and one of interesting directions in mathematical physics is on problems with distant perturbations.  A classical example is a Schr\"odinger operator with two wells  separated by a large distance. A more general case is an elliptic differential operator in an unbounded domain, whose coefficients can be shortly described as several localized profiles connected by periodic backgrounds, see Figures~1,~2. The behavior of the resolvents and isolated eigenvalues of such operators in various particular cases were studied in a series of works, see, for instance,
\cite{AKM}, \cite{JPA04}, \cite{JMP06}, \cite{D},   \cite{GHS}, \cite{H}, \cite{HK}, \cite{BoDAN06}. General results on the resolvents and isolated eigenvalues of general operators with abstract distant perturbations were established in \cite{AHP07}, \cite{MPAG07}, \cite{GolRJMP}, \cite{GolAA}, \cite{GolPMA}.

Resonances of the operators with distant perturbations were studied much less. A classical case of the Schr\"odinger operator with two or several well separated by large distances was considered in \cite{HoMeb}, \cite{KS1}. It was found that once the Schr\"odinger operator with one of the wells has a virtual level at the bottom of its essential spectrum, the multiple well operator has infinitely many resonances near the same bottom; the leading terms in the asymptotics for these resonances were found. In \cite{Bar},  a one-dimensional  Schr\"odinger operator was considered with a periodic truncated potential, that is, a periodic potential replaced by the zero as $|x|>L$ for $L$  large enough. This is also a model with distant perturbation since such operator can be regarded as glued from two discrete Schr\"dinger operators with a potential being periodic as $\pm x>0$ and vanishing as $\mp x<0$. A similar discrete model was studied in \cite{Kl}. The main result in \cite{Bar}, \cite{Kl} stated that both discrete and continuous operators with truncated periodic potentials had a growing number of closely spaced resonances accumulated along some curve near the bottom of the essential spectrum. This result is very similar to that of \cite{HoMeb}, \cite{KS1} with the only difference that the presence of the virtual level was replaced by the truncation of the periodic potential. Very recently, a similar phenomenon was studied in \cite{JPA19}, \cite{AML20} again for a one-dimensional operator with two localized perturbations separated by a large distance. The perturbations were differential operators with compactly supported coefficients. An important feature was that these perturbations were not supposed to be symmetric in the operator sense, so, the perturbed operator could be non-self-adjoint. No presence of the virtual level at the bottom of the essential spectrum or the truncation of a periodic potential were assumed. It was shown that nevertheless, the phenomenon still held. Namely,  there emerges a growing number of resonances or eigenvalues accumulating to a fixed segment in the essential spectrum. The presence of the eigenvalues is due to non-self-adjointness of the considered operator. The properties of these emerging eigenvalues and resonances were studied in much more details than in \cite{Bar}, \cite{HoMeb}, \cite{KS1}, \cite{Kl}. In particular, these  eigenvalues and resonances were found  as sums of explicitly written convergent series and the error terms in these series were estimated   uniformly in an index counting these resonances and eigenvalues; these series also served as asymptotic ones. Also a simple effective procedure was proposed for finding these eigenvalues and resonances numerically with an arbitrary high precision.

Operators with distant perturbations not necessarily always have infinitely many resonances accumulating near some point or a segment in the essential spectrum. They can be situations, when only finitely many resonances emerge from a point in the essential spectrum. This was the case in \cite{BEG}, where a Laplacian was considered in the strip with a combination of Dirichlet and Neumann conditions imposed so that the final model could be regarded as an operator with three distant perturbations and the central perturbation had a simple isolated discrete eigenvalue $\l_0$ embedded into the essential spectrum of the left and right perturbations. It was shown that the original operator had one resonance converging to $\l_0$ and the leading terms in its asymptotics were found.

In the present work we consider a general second order scalar differential operator in a multi-dimensional cylinder with finitely many distant perturbations. This operator is obtained via ``gluing'' several single perturbation operators, each having coefficients periodic outside some compact set, see Figure~1. We study the resonances of such operator with distant perturbations emerging in the vicinity of a some point $\l_0$ in its essential spectrum. This point is supposed to be an internal in the essential spectra of the left or right single perturbation operator, not to belong to the essential spectra of middle single perturbations operators and to be an isolated eigenvalue of at least one of the single perturbation operators. Under this assumption we show that the considered operator with distant perturbations has finitely many resonances in the vicinity of $\l_0$. We find the leading terms of their asymptotics expansions; our technique also allows one to construct the next terms in these asymptotics, although this is quite a bulky and technical procedure. The situation we consider is important and deserves an independent study  since it seems to be the only case, when finitely many resonances emerge in the vicinity of the point $\l_0$. Once our assumption on $\l_0$ fails, namely, if $\l_0$ is in the essential spectrum of one of middle single perturbation operators, we strongly expect that infinitely many resonances should emerge in the vicinity of $\l_0$ similar to the model studied in \cite{JPA19}, \cite{AML20}.

\section{Problem and results}

\subsection{Problem}\label{ss:pr}

Let $x=(x_1,x')$, $x'=(x_2,\ldots,x_d)$ be Cartesian coordinates in $\mathds{R}^d$ and $\mathds{R}^{d-1}$, respectively, $d\geqslant 2$, $\om\subset\mathds{R}^{d-1}$ be a bounded domain with a boundary in the class $C^2$, $\Pi:=\mathds{R}\times\om$  be an infinite straight cylinder in  $\mathds{R}^d$. By $A_{ij}^{(k)}=A_{ij}^{(k)}(x)$,
$A_j^{(k)}=A_j^{(k)}(x)$, $A_0^{(k)}=A_0^{(k)}(x)$, $i,j=1,\ldots,d$, $k=1,\ldots,n$, $n\geqslant 2$, we denote real functions possessing the following smoothness: $A_{ij}^{(k)},\, A_j^{(k)}\in W_\infty^1(\Pi)$, $A_0^{(k)}\in L_\infty(\Pi)$. The functions $A_{ij}^{(k)}$ are assumed to be symmetric and to satisfy an ellipticity condition:
\begin{equation*}
A_{ij}^{(k)}(x)=A_{ji}^{(k)}(x),\quad \sum\limits_{i,j=1}^{d} A_{ij}^{(k)}(x)\z_i\overline{\z_j}\geqslant c_0\sum\limits_{j=1}^{d} |\z_j|^2,\quad x\in\overline{\Pi},\quad \z_i\in\mathds{C},
 \end{equation*}
where $c_0>0$ is some fixed constant independent of $x$ and $\z$.

We suppose that the functions    $A_{\natural}^{(k)}$, $\natural=i,j;\, j;\, 0$,
are periodic in the outlets of the cylinder $\Pi$. Namely, there exists  a constant $a>0$ such that as $\pm x_1\geqslant a$, the functions  $A_{\natural}^{(k)}$ are periodic in $x_1$, that is,
\begin{gather*}
A_{\natural}^{(k)}(x_1-p T^{(k-1)},x')=A_{\natural}^{(k)}(x),\quad \hphantom{^{+1}} x_1\leqslant -a,\qquad A_{\natural}^{(k)}(x_1+p T^{(k)},x')=A_{\natural}^{(k)}(x),\quad x_1\geqslant a,
\\
\natural=i,j;\, j;\, 0;\quad i,j=1,\ldots,d,\quad j=1,\ldots,d,\quad k=1,\ldots,n,\quad p\in\mathds{N},
\end{gather*}
where $T^{(k)}>0$, $k=0,\ldots,n$, are some periods. We also assume that
\begin{equation*}
A_{\natural}^{(k)}(a^{(k)}_+ + t,x')=A_{\natural}^{(k+1)}(-a^{(k+1)}_- + t, x'), \qquad \natural=i,j;\,j;\,0,\quad x'\in\om,\quad 0\leqslant t\leqslant T^{(k)},
\end{equation*}
where $a_\pm^{(k)}\geqslant a$  are some numbers, $i,j=1,\ldots,d$, $k=1,\ldots,n-1$.
This condition means that the right periodic part of the function  $A_{\natural}^{(k)}$ coincides identically with the left periodic part of the function   $A_{\natural}^{(k+1)}$, see a schematic demonstration on Figure~1.

By  $X^{(k)}$, $k=1,\ldots,n$,  we denote a set of the numbers obeying the conditions
\begin{equation*}
X^{(k+1)}_\ell-X^{(k)}_\ell=a_-^{(k+1)}+a_+^{(k)} + (\ell_-^{(k)}+\ell^{(k)}_+)T^{(k)}, \quad k=1,\ldots,n-1,
\end{equation*}
where $\ell_\pm^{(k)}$ are some arbitrary natural numbers. Such numbers always exist since it is sufficient to choose  $X^{(1)}_\ell$ and to find other  $X^{(k)}_\ell$ by the above conditions. Let  $\chi_\ell^{(1)}(x)$  be the characteristic function of the semi-infinite  interval $(-\infty,a_+^{(1)}+\ell_+^{(1)}T^{(0)}_\ell]$, and $\chi_\ell^{(k)}$, $k=2,\ldots,n-1$, be the characteristic functions of the intervals $(X^{(k-1)}-a_-^{(k)}-\ell_-^{(k-1)} T^{(k-1)}_\ell, X^{(k)}+a_+^{(k)}+\ell_+^{(k)}T^{(k)}_\ell]$, and finally, $\chi_\ell^{(n)}$ be the characteristic function of the semi-infinite interval $[X^{(n)}-a_-^{(n)}-\ell_-^{(n)}T^{(n-1)}_\ell,+\infty)$.
We denote:
\begin{equation*}
A_\natural^{(\ell)}(x):=\sum\limits_{k=1}^{n} \chi_\ell^{(k)}(x) A_\natural^{(k)}(x_1-X^{(k)}_\ell,x').
\end{equation*}
A schematic graph of these functions is provided on Figure~2.

The main object of our study is the operator in $L_2(\Pi)$ acting as
\begin{equation}\label{2.6}
\Op_\ell= -\sum\limits_{i,j=1}^{d} \frac{\p\ }{\p x_i} A_{ij}^{(\ell)}(x)\frac{\p\  }{\p x_j} + \iu \sum\limits_{j=1}^{d} \left(A_j^{(\ell)}(x)\frac{\p\ }{\p x_j} + \frac{\p\ }{\p x_j} A_j^{(\ell)}(x)\right) + A_0^{(\ell)}(x),
\end{equation}
on the domain  $\Ho^2(\Pi)$, which is a subspace of the Sobolev space  $\H^2(Q)$ consisting of the functions with  the zero trace on  $\p\Pi$, while   $\ell:=(\ell_\pm^{(k)})_{k=1,\ldots,n-1}$ and $\iu$ is the imaginary unit. The operator  $\Op_\ell$ is self-adjoint and lower-semibounded. We consider the case $\ell_\pm^{(k)}\to+\infty$, $k=1,\ldots,n-1$; in what follows, we briefly write this fact as   $\ell\to\infty$.

\begin{figure}
\begin{center}
\includegraphics[scale=0.65]{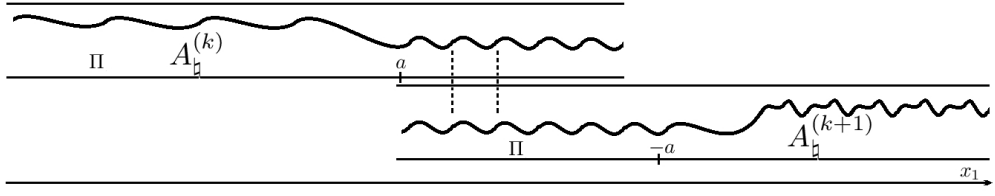}

\caption{\small Schematic graphs of the coefficients $A_\natural^{(k)}$ shown in two copies of the cylinder $\Pi$.}
\end{center}
\end{figure}

\begin{figure}
\begin{center}
\includegraphics[scale=0.65]{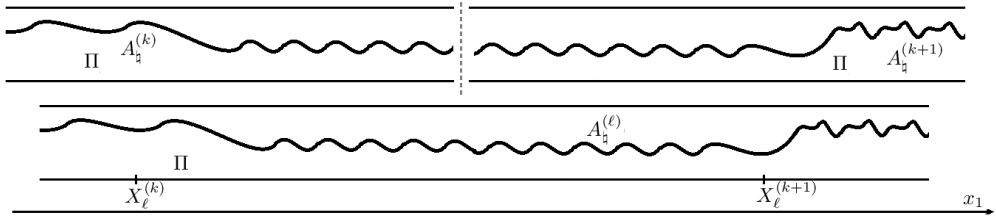}

\caption{\small Schematic graphs of the coefficients $A_\ell^{(k)}$. In the upper part of the figure two copies of the cylinder  $\Pi$ are shown and in each of them we provide the sketches of the graphs of the functions  $A_\natural^{(k)}$ and $A_\natural^{(k+1)}$. In the lower part of the cylinder $\Pi$ we show the graph of the function $A_\natural^{(\ell)}$ glued from the functions  $A_\natural^{(k)}$ and $A_\natural^{(k+1)}$.}
\end{center}
\end{figure}

Our main aim is to study the resonances of the operator  $\Op_\ell$ emerging from certain internal points in the essential spectrum as   $\ell \to \infty$, $k=1,\ldots,n-1$. The resonances are introduced as poles of an appropriate local analytic continuation of the resolvent of the operator $\Op_\ell$ in the vicinity of a point $\l_0\in\mathds{R}$.

Our main assumption on the point $\l_0$ is formulated in terms of a family of self-adjoint lower semi-bounded operators
in the space  $L_2(\Pi)$ acting as
\begin{equation*}
\Op^{(k)}:=-\sum\limits_{i,j=1}^{d} \frac{\p\ }{\p x_i} A_{ij}^{(k)}(x)\frac{\p\  }{\p x_j} + \iu \sum\limits_{j=1}^{d} \left(A_j^{(k)}(x)\frac{\p\ }{\p x_j} + \frac{\p\ }{\p x_j} A_j^{(k)}(x)\right) + A_0^{(k)}(x)
\end{equation*}
on the domain  $\Ho^2(\Pi)$. This assumption says that $\l_0$ is an internal point of the essential spectrum of $\Op_\ell$, is not in the essential spectra of the operators $\Op^{(k)}$, $k=2,\ldots,n-1$, but is an eigenvalue of at least one  of the operators  $\Op^{(k)}$, $k=1,\ldots,n$. In the next subsection we describe the aforementioned local analytic continuation  of the resolvent of the operator $\Op_\ell$ in the vicinity of a point $\l_0\in\mathds{R}$.

\subsection{Analytic continuation}

We first introduce some auxiliary notations and mention known facts. We denote
\begin{align*}
&A_{\natural,per}^{(0)}(x):=A_\natural^{(1)}(x),\qquad x\in[-a_-^{(1)}-T^{(0)},-a_-^{(1)}]\times\overline{\om},
\\
&A_{\natural,per}^{(k)}(x):=A_\natural^{(k)}(x), \qquad x\in[a_+^{(k)},a_+^{(k)}+T^{(k)}]\times\overline{\om},\quad k=1,\ldots,n.
\end{align*}
We continue the functions  $A_{\natural,per}^{(k)}(x)$ $T^{(k)}$-periodically in $x_1$ on entire cylinder  $\Pi$ and consider the family of self-adjoint lower-semibounded periodic operators
\begin{equation*}
\Op^{(k)}_{per}:=-\sum\limits_{i,j=1}^{d} \frac{\p\ }{\p x_i} A_{ij,per}^{(k)}(x)\frac{\p\  }{\p x_j} + \iu \sum\limits_{j=1}^{d} \left(A_{j,per}^{(k)}(x)\frac{\p\ }{\p x_j} + \frac{\p\ }{\p x_j} A_{j,per}^{(k)}(x)\right) + A_{0,per}^{(k)}(x)
\end{equation*}
in $L_2(\Pi)$ on the domain $\Ho^2(\Pi)$. The spectra of these operators have a band structure:
\begin{equation*}
\spec(\Op_{per}^{(k)})=\bigcup\limits_{p=1}^{\infty} \Big\{E_p^{(k)}(\tau):\, \tau\in\left(-\tfrac{\pi}{T^{(k)}},\tfrac{\pi}{T^{(k)}}\right]\Big\},
\end{equation*}
where $\spec(\cdot)$ is the spectrum of an operator, $E_p^{(k)}(\tau)$ are the eigenvalues  of the corresponding operator  on the periodicity cell $\square^{(k)}:=(0,T^{(k)})\times\om$ taken in the ascending order counting multiplicities. The corresponding operators on the periodicity cells are self-adjoint lower-semibounded  operators in $L_2(\square^{(k)})$
 acting as
\begin{align}\label{2.9}
&\hat{\Op}^{(k)}_{per}(\tau):=\sum\limits_{i,j=1}^{d}D_i A_{ij,per}^{(k)} D_j + \sum\limits_{j=1}^{d} \big(A_{j,per}^{(k)}D_j + D_j A_{j,per}^{(k)}\big) + A_{0,per}^{(k)},\quad \tau\in\big(-\tfrac{\pi}{T^{(k)}},\tfrac{\pi}{T^{(k)}}
\big],
\\
&D_1:=\iu\frac{\p\ }{\p x_1} - \tau,\quad D_j:=\iu\frac{\p\ }{\p x_j},\quad j=2,\ldots,d, \nonumber
\end{align}
on the domains $\Hoper^2(\square^{(k)})$, which is a subspace in the Sobolev space
$\Ho^2(\square^{(k)})$ consisting of the functions satisfying the periodic boundary conditions on the lateral sides  $\p\square^{(k)}\setminus\p\Pi$ of the cell $\square^{(k)}$ and having zero trace on $\p\Pi\cap\p\square^{(k)}$.

The following lemma, implied by  \cite[Lm. 2.1]{PMA2020}, describes the essential spectrum of the operators  $\Op^{(k)}$ and $\Op_\ell$; such spectrum is denoted by the symbol $\essspec(\cdot)$.

\begin{lemma}\label{lm2.1}
The essential spectra of the operators $\Op^{(k)}$ and $\Op_\ell$ read as
\begin{equation*}
\essspec(\Op^{(k)})=\spec(\Op^{(k-1)}_{per})\cup\spec(\Op^{(k)}_{per}),\quad k=1,\ldots,n,
\qquad \essspec(\Op_\ell)=\spec(\Op^{(0)}_{per})\cup\spec(\Op^{(n)}_{per}).
\end{equation*}
\end{lemma}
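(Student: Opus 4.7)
The plan is to deduce the lemma from the cited result \cite[Lm. 2.1]{PMA2020} by verifying that the operators $\Op^{(k)}$ and $\Op_\ell$ fit the abstract framework of two-sided periodic outlets, or, equivalently, to reproduce that argument in the present setting by a standard decoupling plus Weyl sequence construction. I would organize the proof around two inclusions proved separately, exploiting the fact that on the outlets $\pm x_1 \geqslant a$ (resp.\ after appropriate translation for $\Op_\ell$) each of the operators coincides coefficient-wise with one of the auxiliary periodic operators $\Op^{(k)}_{per}$ introduced in Subsection~2.2.

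For the inclusion $\spec(\Op^{(k-1)}_{per}) \cup \spec(\Op^{(k)}_{per}) \subset \essspec(\Op^{(k)})$ (and analogously $\spec(\Op^{(0)}_{per}) \cup \spec(\Op^{(n)}_{per}) \subset \essspec(\Op_\ell)$), I would construct explicit singular Weyl sequences. Given $\l \in \spec(\Op^{(k)}_{per})$, write $\l = E_p^{(k)}(\tau_0)$ and take the corresponding Bloch eigenfunction $\psi_p^{(k)}(x,\tau_0)$ on the periodicity cell, extended quasi-periodically to all of $\Pi$. Cut it off by a sequence of smooth plateau functions $\chi_m$ supported in $\{x_1 \geqslant a + m\} \times \om$ with plateau length tending to infinity, so that the supports drift to $+\infty$ while the cutoff regions (where derivatives of $\chi_m$ are concentrated) become negligible relative to $\|\chi_m \psi_p^{(k)}\|_{L_2(\Pi)}$. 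In this support region the coefficients of $\Op^{(k)}$ literally equal those of $\Op^{(k)}_{per}$, so $(\Op^{(k)} - \l)(\chi_m \psi_p^{(k)}) = [\Op^{(k)}_{per},\chi_m]\psi_p^{(k)}$ and a direct estimate gives a Weyl sequence. Sequences supported in the left outlet give the $\Op^{(k-1)}_{per}$-piece.

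For the reverse inclusion, I would use a Glazman-type decoupling. Choose cut points $\pm R$ in the outlets where the coefficients are already periodic, and introduce Dirichlet conditions on the cross-sections $\{\pm R\}\times\om$. This yields a direct sum $\Op^{(k)}_- \oplus \Op^{(k)}_0 \oplus \Op^{(k)}_+$, where $\Op^{(k)}_0$ acts on the bounded box $[-R,R]\times\om$ (hence has discrete spectrum and empty essential spectrum) and $\Op^{(k)}_\pm$ act on the half-cylinders with periodic coefficients. The Dirichlet decoupling differs from $\Op^{(k)}$ by a resolvent-compact perturbation (the difference of resolvents is compact by a standard trace-class argument in $\H^2$ on bounded sections), so essential spectra coincide by Weyl's theorem. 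Finally, for a periodic operator on a half-cylinder one has $\essspec(\Op^{(k)}_\pm) = \spec(\Op^{(k)}_{per})$ (respectively $\Op^{(k-1)}_{per}$), which follows from Floquet--Bloch decomposition combined with the fact that Dirichlet truncation of a full-line periodic operator yields the same band spectrum as its essential spectrum. The argument for $\Op_\ell$ is identical, since on the two outermost outlets $\Op_\ell$ reproduces the periodic structures $\Op^{(0)}_{per}$ and $\Op^{(n)}_{per}$ while all middle pieces are confined to bounded boxes.

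The main technical obstacle, as usual in this framework, is verifying that the Dirichlet decoupling is a resolvent-compact perturbation uniformly in the non-self-adjoint-looking lower order terms $\iu(A_j \p_j + \p_j A_j)$ with $A_j \in W^1_\infty$, and handling the ellipticity together with the boundary conditions on $\p\Pi$. Once one establishes the uniform $\H^2$-elliptic regularity up to $\p\Pi \cap \{|x_1| \leqslant R\}$ the compactness follows from the Rellich embedding, but one must be careful that the commutators with the cutoff $\chi_m$ in the Weyl sequence construction are controlled in $L_2$-norm by the spatial decay of $\p \chi_m$ and not by anything depending on $\tau_0$. Since the cited lemma in \cite{PMA2020} has already handled exactly this situation in an essentially identical geometry, the bulk of the proof reduces to a direct verification that our hypotheses on the periods $T^{(k)}$ and matching conditions $a^{(k)}_\pm$ are the ones covered there.
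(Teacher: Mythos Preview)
Your proposal is correct and in fact goes beyond what the paper does: the paper gives no proof at all for this lemma, merely stating that it is ``implied by \cite[Lm.~2.1]{PMA2020}''. Your opening plan---deduce the statement directly from that cited result by checking the hypotheses---is therefore exactly the paper's approach, and your additional Weyl-sequence/Dirichlet-decoupling sketch is a faithful outline of how such results are proved in the referenced work.
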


We note that the band functions $E_p^{(k)}(\tau)$  and the corresponding eigenfunctions $\Psi_p^{(k)}(x,\tau)$ can be $\frac{2\pi}{T^{(k)}}$-periodically continued in  $\tau$ on the entire real line preserving their smoothness. In what follows, for the sake of convenience, we assume that this continuation is made on the segment  $\big[-\tfrac{2\pi}{T^{(k)}},\tfrac{2\pi}{T^{(k)}}\big]$.

Since $\l_0\in\mathds{R}$ is an internal point of the essential spectra of $\Op_\ell$, by Lemma~\ref{lm2.1}, this means that  $\l_0$ is an internal point in the essential spectrum of at least one of the operators  $\Op^{(k)}_{per}$, $k\in\{0,n\}$. It follows from Theorem~3.9  in \cite[Ch. V\!I\!I, Sect. 5]{Kato} that there exist points   $\tau_p^{(k)}\in\big(-\tfrac{\pi}{T^{(k)}},\tfrac{\pi}{T^{(k)}}\big]$ and numbers $\d_0$, $\d_p^{(k)}>0$ such that the eigenvalues of the operator $\Op_{per}^{(k)}(\tau)$ located in the interval  $[\l_0-\d_0,\l_0+\d_0]$ form holomorphic in  $\tau\in[\tau_p^{(k)}-\d_p^{(k)},\tau_p^{(k)}+\d_p^{(k)}]$  branches $\tE_p^{(k)}(\tau)$, $p=1,\ldots,M^{(k)}$, where $M^{(k)}$ are some numbers. The image of each branch $\tE_p^{(k)}$ covers the segment $[\l_0-\d_0,\l_0+\d_0]$. The corresponding eigenfunctions  $\tPsi_p^{(k)}(x,\tau)$ are orthonormalized in  $L_2(\square^{(k)})$  and are holomorphic in $\tau\in[\tau_p^{(k)}-\d_p^{(k)},\tau_p^{(k)}+\d_p^{(k)}]$. The identities hold:
\begin{equation}\label{2.12}
\tE_p^{(k)}(\tau_p^{(k)})=\l_0.
\end{equation}
Our main assumption is the following identities:
\begin{equation}\label{2.13}
\frac{d \tE_p^{(k)}}{d\tau}(\tau_p^{(k)})\ne0, \qquad p=1,\ldots,M^{(k)}.
\end{equation}
We stress that we do not suppose the ascending ordering for the eigenvalues  $\tE_p^{(k)}(\tau)$, $\tau\in[\tau_p^{(k)}-\d_p^{(k)},\tau_p^{(k)}+\d_p^{(k)}]$. The derivative in the left hand side in inequality  (\ref{2.13}) is obviously real.

It is clear that the eigenvalues $\tE_p^{(k)}(\tau)$ and the corresponding eigenfunctions $\tPsi_p^{(k)}(x,\tau)$ can be holomorphically continued in $\tau$ into some neighbourhood of the segment $[\tau_p^{(k)}-\d_p^{(k)},\tau_p^{(k)}+\d_p^{(k)}]$ in the complex plane; in this sense we regard them as complex functions depending on a complex parameter $\tau$. We fix $k$, $p$ and in a small neighbourhood of the point  $\l_0$ in the complex plane we consider the equation
\begin{equation}\label{2.15}
\tE_p^{(k)}(\tau)=\l,
\end{equation}
where the parameter $\tau$ ranges in the aforementioned neighbourhood of the segment $[\tau_p^{(k)}-\d_p^{(k)},\tau_p^{(k)}+\d_p^{(k)}]$ in the complex plane. Conditions  (\ref{2.12}), (\ref{2.13}) allow us to apply the implicit function theorem to equation  (\ref{2.15}) and to conclude on the unique solvability of this equation in a small neighbourhood of the point  $\tau_p^{(k)}$. Without loss of generality we can assume that these neighbourhoods coincide with the aforementioned neighbourhood of the segments $[\tau_p^{(k)}-\d_p^{(k)},\tau_p^{(k)}+\d_p^{(k)}]$. We denote the corresponding solutions by  $\mathfrak{t}_p^{(k)}(\l)$. These functions are holomorphic in  $\l$ in a small neighbourhood of the point $\l_0$ and  satisfy the identities:
\begin{equation*}
\mathfrak{t}_p^{(k)}(\l_0)=\tau_p^{(k)}.
\end{equation*}
We denote $\psi_p^{(k)}(x,\l):=e^{\iu \mathfrak{t}_p^{(k)}(\l) x_1} \tilde{\Psi}_p^{(x)}(x,\mathfrak{t}_p^{(k)}(\l))$. The symbol  $\Holoc^2(\Pi)$ stands for the space of the functions in  $\Hloc^2(\Pi)$ with a zero trace on $\p\Pi$.

The following lemma was proved in \cite{PMA2020}.

\begin{lemma}\label{lm2.2}
The functions $\psi_p^{(k)}(x,\l)$ belong to  $\Holoc^2(\Pi)$  and solve the equations
$(\hat{\Op}_{per}^{(k)}-\l)\psi_p^{(k)}=0$ in $\Pi$. 
Let
\begin{equation}\label{2.19}
\frac{d \tE_p^{(k)}}{d\tau}(\tau_p^{(k)})>0.
\end{equation}
Then the functions  $\psi_p^{(k)}(x,\l)$ decay exponentially as  $x_1\to\pm\infty$ and grow exponentially as $x_1\to\mp\infty$ if $\pm\IM \l>0$. 
Let
\begin{equation}\label{2.18}
\frac{d \tE_p^{(k)}}{d\tau}(\tau_p^{(k)})<0.
\end{equation}
Then the functions  $\psi_p^{(k)}(x,\l)$  grow exponentially as $x_1\to\pm\infty$ and decay exponentially as $x_1\to\mp\infty$ if $\pm\IM \l>0$.
\end{lemma}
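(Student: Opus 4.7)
The plan is to reduce the lemma to two elementary calculations: a ``gauge transformation'' that relates the differential expression $\Op_{per}^{(k)}$ on $\Pi$ to the twisted cell operator $\hat{\Op}_{per}^{(k)}(\tau)$, and a one-line application of the implicit function theorem to the dispersion relation $\tE_p^{(k)}(\tau)=\l$.

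First I would verify the equation $(\Op_{per}^{(k)}-\l)\psi_p^{(k)}=0$ by a direct gauge computation. Using the elementary identities
\begin{equation*}
\p_1\bigl(e^{\iu\tau x_1}u\bigr)=-\iu e^{\iu\tau x_1}D_1 u,\qquad \p_j\bigl(e^{\iu\tau x_1}u\bigr)=-\iu e^{\iu\tau x_1}D_j u,\quad j\geqslant 2,
\end{equation*}
with $D_1=\iu\p_1-\tau$ and $D_j=\iu\p_j$ as in (\ref{2.9}), a straightforward calculation gives
\begin{equation*}
\Op_{per}^{(k)}\bigl(e^{\iu\tau x_1}u\bigr)=e^{\iu\tau x_1}\hat{\Op}_{per}^{(k)}(\tau) u
\end{equation*}
for any sufficiently regular $u$. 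Applying this with $u=\tPsi_p^{(k)}(\cdot,\tau)$ and $\tau=\mathfrak{t}_p^{(k)}(\l)$, and invoking the eigenvalue relation $\hat{\Op}_{per}^{(k)}(\tau)\tPsi_p^{(k)}(\cdot,\tau)=\tE_p^{(k)}(\tau)\tPsi_p^{(k)}(\cdot,\tau)$ together with (\ref{2.15}), yields the claimed identity. The inclusion $\psi_p^{(k)}\in\Holoc^2(\Pi)$ follows because $\tPsi_p^{(k)}(\cdot,\tau)\in\Hoper^2(\square^{(k)})$ admits a periodic extension to $\Pi$ lying in $\Holoc^2(\Pi)$: the periodic boundary conditions on the lateral sides of $\square^{(k)}$ provide the $\H^2$-regularity across cell interfaces, while the zero-trace condition on $\p\Pi\cap\p\square^{(k)}$ propagates to all of $\p\Pi$. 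Multiplication by the entire function $e^{\iu\mathfrak{t}_p^{(k)}(\l)x_1}$ preserves both properties.

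For the growth and decay statements I would differentiate the identity $\tE_p^{(k)}(\mathfrak{t}_p^{(k)}(\l))=\l$ at $\l=\l_0$ to obtain
\begin{equation*}
\frac{d\mathfrak{t}_p^{(k)}}{d\l}(\l_0)=\Bigl(\frac{d\tE_p^{(k)}}{d\tau}(\tau_p^{(k)})\Bigr)^{\!-1},
\end{equation*}
which by (\ref{2.13}) is a nonzero \emph{real} number. Consequently, for $\l$ in a sufficiently small complex neighbourhood of $\l_0$, the sign of $\IM\mathfrak{t}_p^{(k)}(\l)$ coincides with that of $\IM\l$ under the assumption (\ref{2.19}) and is opposite to it under (\ref{2.18}). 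Since
\begin{equation*}
|\psi_p^{(k)}(x,\l)|=e^{-x_1\IM\mathfrak{t}_p^{(k)}(\l)}\,|\tPsi_p^{(k)}(x,\mathfrak{t}_p^{(k)}(\l))|
\end{equation*}
and the factor $\tPsi_p^{(k)}$ is periodic in $x_1$ and therefore bounded above and nontrivial by its $L_2(\square^{(k)})$-normalisation, this sign analysis immediately yields exponential decay as $x_1\to\pm\infty$ and exponential growth as $x_1\to\mp\infty$ under the respective hypotheses. The only delicate point is the reality of $\mathfrak{t}_p^{(k)\prime}(\l_0)$, which is precisely why the reality of the derivative in (\ref{2.13}) is explicitly noted in the hypothesis; I do not anticipate any other substantial obstacle.
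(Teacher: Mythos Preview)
Your argument is correct: the gauge identity $\Op_{per}^{(k)}(e^{\iu\tau x_1}u)=e^{\iu\tau x_1}\hat{\Op}_{per}^{(k)}(\tau)u$ together with the implicit-function computation of $\IM\mathfrak{t}_p^{(k)}(\l)$ is exactly the natural route, and your sign analysis matches the statement in both cases. The paper itself does not give a proof but simply cites \cite{PMA2020}, so there is no in-paper argument to compare against; your proof is the standard one and almost certainly what the cited reference contains. One cosmetic remark: the phrase ``bounded above'' for $\tPsi_p^{(k)}$ tacitly uses a Sobolev embedding that may fail pointwise in high dimension, but since exponential decay/growth here is meant in the weighted-$L_2$ sense of the spaces $W_{2,\g,b}^{2,\pm}$, periodicity of $\tPsi_p^{(k)}$ alone already gives what you need.
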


By $P_+$ we denote the subset in the set $\{1,\ldots,M^{(n)}\}$ of the elements, for which condition (\ref{2.19}) holds. Similarly,  $P_-$ is a subset in the set $\{1,\ldots,M^{(0)}\}$ of the elements, for which condition (\ref{2.18}) holds. If $\l_0\not\in\essspec(\Op^{(n)}_{per})$, to simplify the notations and calculations, we let $P_+:=\emptyset$, $\psi_p^{(n)}:=0$. If $\l_0\not\in\essspec(\Op^{(0)}_{per})$, we also let $P_-:=\emptyset$, $\psi_p^{(0)}:=0$.

The operators $\Op^{(k)}_{per}(\tau)$, $k\in\{0,\ldots,n\}$, introduced in (\ref{2.9}) are well-defined also for complex-valued $\tau$. At that, they are no longer self-adjoint for non-real $\tau$ but it is still closed on the same domain. We consider the operator  $\Op^{(k)}_{per}(\tau)-\l_0$ as a quadratic operator pencil with the spectral parameter $\tau$. If  $\tau$ is an eigenvalue of the operator pencil $\Op^{(k)}_{per}(\tau)-\l_0$, then $\tau+\frac{2\pi}{T^{(k)}}$ is also an eigenvalue and the corresponding eigenfunctions are related by the multiplying by  $e^{\frac{2\pi\iu}{T^{(k)}} x_1}$. This is in what follows we consider the eigenvalues of this pencils located only in the strip  $-\frac{\pi}{T^{(k)}}<\RE \tau\leqslant \frac{\pi}{T^{(k)}}$ identifying the boundaries of this strip.

If $\l_0$ is a point in the essential spectrum of the operator $\Op^{(k)}_{per}$, $k\in\{0,n\}$, by Lemma~\ref{lm2.2}, the numbers $\tau_p^{(k)}$ exhaust all real eigenvalues of the operator pencil $\Op^{(k)}_{per}(\tau)-\l_0$; the corresponding eigenfunctions are  $\psi_p^{(k)}(x,\l_0)$. If  $\l_0$ is not in the essential spectrum of the operator $\Op^{(k)}_{per}$, the operator pencil  $\Op^{(k)}_{per}(\tau)-\l_0$ possesses no real eigenvalues.

For an arbitrary $b$ we denote:
\begin{equation*}
\Pi_b:=\Pi\cap\{x:\, |x_1|<b\},\quad \Pi_b^+:=\Pi\cap\{x:\, x_1>b\},\quad \Pi_b^-:=\Pi\cap\{x:\, x_1<b\}.
\end{equation*}
Let $\g\in\mathds{R}$, $b>0$ be some numbers, $\vr_b$ be the characteristic function of the set $\overline{\Pi_b}$. By  $W_{2,\g,b}^{2,\pm}$ we denote the weighted spaces consisting of the functions  $u\in\Ho^2(\Pi_b^{\pm},\p\Pi_b^{\pm}\cap\p\Pi)$ having a finite norm
\begin{equation*}
\|u\|_{W_{2,\g,b}^{2,\pm}}:=\|e^{\g  x_1}u\|_{\H^2(\Pi_b^\pm)}.
\end{equation*}

The following theorem describes an analytic continuation of the resolvent of the operator $\Op_\ell$; it was proved in \cite{PMA2020}.

\begin{theorem}\label{th2.1}
There exists a small fixed neighbourhood $\Xi$ of the point   $\l_0$ in the complex plane independent of $\ell$ and the form of the coefficients of the operator $\Op_\ell$ as $-a_-^{(0)}+X_\ell^{(1)}<x_1<a_+^{(n)}+X_\ell^{(n)}$ such that in this neighbourhood, the bordered resolvent  $\vr_b(\Op_\ell-\l)^{-1}\vr_b$ admits an analytic continuation from the upper half-plane into the lower one simultaneously for all  $b>0$. The poles of this continuation called resonances can be equivalently defined as values of $\l$, for which there exits  a non-trivial solution  in   $\Holoc^2(\Pi)$ of the boundary value problem
\begin{equation}\label{2.14}
\begin{gathered}
\hat{\Op}_\ell\psi
-\l \psi=0\quad\text{in}\quad\Pi,
\qquad \psi=0\quad\text{on}\quad\p\Pi,
\\
 \psi(x,\l)=\sum\limits_{p\in P_+}
c_p^\pm \psi_p^\pm(x,\l)+\hat{\psi}_\pm(x,\l)\quad\text{as}\quad  \pm x_1>\pm b_\pm.
\end{gathered}
\end{equation}
Here $\hat{\Op}_\ell$ is the differential expression in the right hand side in    (\ref{2.6}),  $c_p^\pm$ are some constants, $\hat{\psi}_+\in W_{2,\g_+,b_+}^{2,+}$ and 
$\hat{\psi}_-\in W_{2,\g_-,b_-}^{2,-}$ are some functions, $b_+:=X_\ell^{(n)}+a$,  $b_-:=X_\ell^{(1)}-a$, and  $\g_+>0$, $\g_-<0$ are some numbers chosen so that for all    $\l\in\Xi$ the strip $\big\{\tau:\, \g_-< \IM\tau < \g_+\big\}$ contains no eigenvalues of the quadratic operator pencils $\Op^{(0)}_{per}(\tau)-\l$ and $\Op^{(n)}_{per}(\tau)-\l$
 except for $\mathfrak{t}_p^\pm(\l)$.
\end{theorem}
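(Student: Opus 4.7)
The plan is to construct an explicit meromorphic continuation of $\vr_b(\Op_\ell-\l)^{-1}\vr_b$ via a Fredholm reduction, and then to identify the poles with the solvability of problem (\ref{2.14}). The argument proceeds in three stages, the core difficulty being concentrated in the first.

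First, I would analytically continue the resolvents of the reference periodic operators $\Op^{(k)}_{per}$, $k\in\{0,n\}$. Via the Floquet--Bloch transform, the kernel of $(\Op^{(k)}_{per}-\l)^{-1}$ is represented as an integral in $\tau$ over the Brillouin zone of the resolvent of the pencil $\hat\Op^{(k)}_{per}(\tau)-\l$. For $\IM\l>0$ the contour is real; to continue into a full complex neighbourhood $\Xi$ of $\l_0$, I deform the $\tau$-contour into the strip $\g_-<\IM\tau<\g_+$, keeping it on one definite side of each moving pole $\mathfrak t_p^\pm(\l)$ according to the sign of $d\tE_p^{(k)}/d\tau(\tau_p^{(k)})$. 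The contribution of the deformed contour provides an exponentially decaying remainder in the weighted spaces $W_{2,\g_\pm,b_\pm}^{2,\pm}$, while the residues picked up at $\mathfrak t_p^\pm(\l)$ contribute precisely the outgoing Floquet modes $\psi_p^\pm(x,\l)$ described in Lemma~\ref{lm2.2}.

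Second, I would glue these continued outgoing parametrices into a parametrix for $\Op_\ell$. Using a partition of unity adapted to the decomposition produced by $\chi_\ell^{(k)}$, I combine the continued resolvents of $\Op^{(0)}_{per}$ and $\Op^{(n)}_{per}$ in the two semi-infinite outlets with the genuine resolvents of the middle single-perturbation operators $\Op^{(k)}$, $k=2,\ldots,n-1$; the latter are holomorphic on $\Xi$ because $\l_0\notin\essspec(\Op^{(k)})$ for those $k$ by Lemma~\ref{lm2.1}. Commutators with the cutoff functions have compactly supported output, so the construction yields an operator $\cR(\l)$, holomorphic on $\Xi$ with values in bounded operators from $L_2$ (with compact support) to $\Holoc^2(\Pi)$, such that
\begin{equation*}
(\Op_\ell-\l)\cR(\l)=I+\cT(\l),
\end{equation*}
with $\cT(\l)$ compact and holomorphic on $\Xi$.

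Third, I would apply the analytic Fredholm theorem. For $\IM\l\gg 0$ the operator $\cR(\l)$ coincides with the true resolvent, so $I+\cT(\l)$ is invertible there; hence $(I+\cT(\l))^{-1}$ is a meromorphic family on $\Xi$, and $\vr_b(\Op_\ell-\l)^{-1}\vr_b=\vr_b\cR(\l)(I+\cT(\l))^{-1}\vr_b$ is the sought continuation, independent of $b$ by uniqueness. A value $\l_*\in\Xi$ is a pole iff $\ker(I+\cT(\l_*))\ne\{0\}$; for any such $\phi$, the function $\psi:=\cR(\l_*)\phi$ lies in $\Holoc^2(\Pi)$, solves $(\hat\Op_\ell-\l_*)\psi=0$, and inherits from the contour deformation exactly the representation on the second line of (\ref{2.14}) with holomorphic coefficients $c_p^\pm$. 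Conversely, any nontrivial solution of (\ref{2.14}) yields, via the reverse gluing, a nonzero element of $\ker(I+\cT(\l_*))$. Uniqueness of the outgoing decomposition, and the absence of spurious exponential modes, follows from the choice of $\g_\pm$ excluding all pencil eigenvalues other than $\mathfrak t_p^\pm(\l)$ from the strip.

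The main obstacle is the first step: the contour deformation has to be performed uniformly in $\l\in\Xi$ and in a way that preserves the holomorphy of the residue coefficients and the exponential decay of the remainder in the weighted norms. Here the nondegeneracy assumption (\ref{2.13}) is essential, since it ensures that each branch $\mathfrak t_p^\pm(\l)$ is holomorphic on $\Xi$ and crosses the real $\tau$-axis transversally at $\l_0$, so that a single deformation works uniformly. Once this local spectral analysis of the periodic ends is in place, the Fredholm and gluing arguments are essentially standard, albeit technically heavy.
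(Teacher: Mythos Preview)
The paper does not prove Theorem~\ref{th2.1}; it merely states that the result ``was proved in \cite{PMA2020}'' and uses it as input. So there is no in-paper argument to compare against, and your outline is necessarily a reconstruction of what that reference presumably does.

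Your scheme --- Floquet--Bloch representation of the periodic end resolvents, contour deformation in $\tau$ past the simple poles $\mathfrak t_p^\pm(\l)$, parametrix gluing, analytic Fredholm --- is the standard machinery for this type of statement and is almost certainly the approach of \cite{PMA2020} as well (the reliance on the Nazarov--Plamenevsky framework \cite{NP} throughout the paper corroborates this). The identification of poles with nontrivial outgoing solutions via $\ker(I+\cT(\l_*))$ is also the expected mechanism.

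One point deserves care. In your second step you propose to use, in the bounded middle zone, the genuine resolvents of the single-perturbation operators $\Op^{(k)}$, $k=2,\ldots,n-1$, arguing they are holomorphic on $\Xi$ since $\l_0\notin\essspec(\Op^{(k)})$. But the theorem asserts that $\Xi$ can be chosen \emph{independently of $\ell$ and of the form of the coefficients in the middle zone}; moreover, the main assumption of the paper explicitly allows $\l_0$ to be a discrete eigenvalue of some middle $\Op^{(k)}$, so those resolvents are in general only meromorphic on $\Xi$, with poles whose location does depend on the middle data. To recover the claimed independence cleanly, the parametrix in the compact middle region should be built from purely local elliptic data (or simply from the end parametrices extended inward by cut-offs), not from the global $\Op^{(k)}$-resolvents. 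This is a fixable detail, not a fatal gap, but as written your construction does not directly yield a $\Xi$ with the stated uniformity.
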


\subsection{Behavior at infinity}

For each  $k\in\{0,\ldots,n\}$,  by $N^{(k)}$ we denote the multiplicity of  $\l_0$ considered as an eigenvalue of the operator $\Op^{(k)}$ and $\phi_p^{(k)}=\phi_p^{(k)}(x)$, $p=1,\ldots,N^{(k)}$ are the associated eigenfunctions orthonormalized in  $L_2(\Pi)$.  If  $\l_0$ is not an eigenvalue of the operator  $\Op^{(k)}$ for some $k$, we let $N^{(k)}:=0$, $\phi_p^{(k)}:=0$. It is clear that under our assumptions the point $\l_0$ is a discrete eigenvalue  or a point in the resolvent set of each operator $\Op^{(k)}$, $k\in\{2,\ldots,n-1\}$. We also stress that it can not be an eigenvalue of an infinite multiplicity for the operators $\Op^{(k)}$, $k\in\{1,n\}$ since such situation is excluded by inequalities (\ref{2.13}).

In order to formulate our main results, we need to describe the behavior at infinity of the eigenfunctions $\phi_p^{(k)}$ associated with the eigenvalue $\l_0$. While in  simplest cases
this issue is trivially resolved by an appropriate separation of variables, the situation is more complicated for arbitrary periodically varying coefficients. In order to describe this situation, we employ general results from \cite{NP}; they provide a needed description in terms of the above introduced quadratic operator pencils $\Op^{(k)}_{per}(\tau)-\l_0$. The final description is not complicated but involves some rather bulky technical details, which we have to deal with.

According \cite[Ch. 3, Sect. 4, Prop. 4.4, Stat. 1]{NP}, the eigenvalues of the quadratic  operator pencils $\Op^{(k)}_{per}(\tau)-\l_0$ are isolated, form a countable set and can accumulate at infinity only in an angle $|\IM\tau|>C|\RE\tau|$, $C=const$. It is easy to see that
\begin{equation*}
\big(\Op^{(k)}_{per}(\tau)-\l_0\big)^*=\Op^{(k)}_{per}(\overline{\tau})-\l_0,\qquad \tau\in\mathds{C}.
\end{equation*}
The operator $\Op^{(k)}_{per}(\tau)-\l_0$ is Fredholm, see \cite[Ch. V\!I\!I\!I, Sect. 29.3]{VT}. Hence, the eigenvalues of the operator pencil $\Op^{(k)}_{per}(\tau)-\l_0$ are complex conjugate. The eigenvalues with positive imaginary parts are denoted by $\mathfrak{r}^{(k,+)}_i$, $i=1,2,\ldots$, while ones with negative imaginary parts are denoted by $\mathfrak{r}^{(k,-)}_i=\overline{\mathfrak{r}^{(k,+)}_i}$, $i=1,2,\ldots$ Taking the multiplicities  into account, we arrange these eigenvalues as follows:
\begin{align*}
&\IM \mathfrak{r}_1^{(k,+)}\leqslant \IM \mathfrak{r}_2^{(k,+)}\leqslant \IM \mathfrak{r}_3^{(k,+)}\leqslant \ldots,
\qquad 
\IM \mathfrak{r}_1^{(k,-)}\geqslant \IM \mathfrak{r}_2^{(k,-)}\geqslant \IM \mathfrak{r}_3^{(k,-)}\geqslant  \ldots
\end{align*}
We denote
$\mr:= \min\limits_{k=1,\ldots,n-1} \IM \mathfrak{r}_1^{(k,+)}$.
For each $k=1,\ldots,n-1$ we consider the eigenvalues of the operator pencil  $\Op^{(k)}_{per}(\tau)-\l_0$ satisfying the condition
\begin{equation}\label{2.28c}
\IM \mathfrak{r}_i^{(k,\pm)}=\pm\mr;
\end{equation}
these eigenvalues are $\mathfrak{r}_i^{(k,\pm)}$ for $i=1,\ldots,J^{(k)}$ for some $J^{(k)}\geqslant 0$. The identity $J^{(k)}=0$ corresponds to the situation, when the
operator pencil  $\Op^{(k)}_{per}(\tau)-\l_0$ possesses no eigenvalues $\mathfrak{r}_i^{(k,\pm)}$   obeying condition  (\ref{2.28c}). We note that thanks to an aforementioned  discreteness of the set $\{\mathfrak{r}_i^{k,\pm}\}_{i\in\mathds{N}}$, there exists $\g\in(\mr,2\mr)$ such that the strip $
\big\{\tau:\; |\IM\tau|\leqslant \g \big\}$ contains no other eigenvalues of $\Op^{(k)}_{per}(\tau)-\l_0$ except for $\mathfrak{r}_i^{(k,\pm)}$, $i=1,\ldots,J^{(k)}$.

For each eigenvalue   $\mathfrak{r}^{(k,\pm)}_{i}$ there exists an associated Jordan chain  $\Phi^{(k,\pm)}_{is}$, $s=0,\ldots,\vk^{(k)}_{i}-1$, $\vk^{(k)}_{i}\geqslant 1$ is the length of the chain; the lengths of the chains associated with both eigenvalues $\mathfrak{r}^{(k,-)}_{i}$  and $\mathfrak{r}^{(k,+)}_{i}$ are same, see \cite[Ch. 1, Sect. 2, Prop. 2.2]{NP}.
 The functions $\Phi^{(k,\pm)}_{i0}$ are the eigenfunctions associated with the eigenvalue  $\mathfrak{r}^{(k,\pm)}_{i}$, while the adjoint functions solve the equations
\begin{align*}
\big(\Op^{(k)}_{per}(\mathfrak{r}^{(k,\pm)}_{i}) -\l_0\big)\Phi^{(k,\pm)}_{i1}&+ \frac{\p\Op^{(k)}_{per}}{\p\tau}(\mathfrak{r}^{(k,\pm)}_{i})
\Phi^{(k,\pm)}_{i0}=0,
\\
\big(\Op^{(k)}_{per}(\mathfrak{r}^{(k,\pm)}_{i}) -\l_0\big)\Phi^{(k,\pm)}_{is} &+\frac{\p\Op^{(k)}_{per}}{\p\tau}(\mathfrak{r}^{(k,\pm)}_{i})
\Phi^{(k,\pm)}_{i\,s-1}
+ \frac{1}{2}\frac{\p^2\Op^{(k)}_{per}}{\p\tau^2} (\mathfrak{r}^{(k,\pm)}_{i})
\Phi^{(k,\pm)}_{i\,s-2}=0,
\end{align*}
as $s\geqslant 2$, see \cite[Ch. 1, Sect. 2, Subsect. 2]{NP}.

We continue the functions  $\Phi^{(k,\pm)}_{is}$  $T^{(k)}$-periodically in  $x_1$ keeping the same notations. We denote
\begin{equation}\label{2.25}
\vp^{(k,\pm)}_{is}(x):=e^{\pm\iu\mathfrak{r}^{(k,\pm)}_{i} x_1}
\tilde{\vp}^{(k,\pm)}_{is}(x),\qquad \tilde{\vp}^{(k,\pm)}_{is}(x):=
\sum\limits_{p=0}^{s}
\frac{(\iu x_1)^p}{p!}\Phi^{(k,\pm)}_{i\,s-p}(x),
\end{equation}
where  $s=0,\ldots, \vk^{(k)}_{i}-1$.
According \cite[Ch. 5, Sect. 1.3, Thm. 1.4]{NP}, the introduced functions provide the leading terms in the asymptotics at infinity for the eigenfunctions $\phi_p^{(k)}$ of the operators $\Op^{(k)}$:
\begin{align}
&\phi_p^{(k)}(x)=\tilde{\phi}_p^{(k,\pm)}(x)  + \hat{\phi}_p^{(k,\pm)}(x), \qquad \pm x_1>a,
\label{2.22}
\\
& \tilde{\phi}_p^{(k,+)}:=\sum\limits_{i=1}^{J^{(k)}}\sum\limits_{s=0}^{\vk^{(k)}_{i}-1} \a_{pis}^{(k,+)} \vp^{(k,+)}_{is},\qquad k=1,\ldots,n-1,\nonumber
\\
&\tilde{\phi}_{p}^{(k,-)}:= \sum\limits_{i=1}^{J^{(k-1)}}\sum\limits_{s=0}^{\vk^{(k-1)}_{i}-1} \a_{pis}^{(k-1,-)} \vp^{(k-1,-)}_{is},\qquad k=2,\ldots,n,\nonumber
\end{align}
where $\a_{pis}^{(k,\pm)}$ are some numbers and
$\hat{\phi}_p^{(k,\pm)}\in \mathring{W}_{2,\pm\g,\pm a}^{2,\pm}$  are some functions.

We also observe that under the shift  $x_1\mapsto x_1+T$, where $T$ is a multiple of the period $T^{(k)}$, the function $\tilde{\phi}_p^{(k,+)}$ is transformed as follows:
\begin{equation}\label{2.26}
\begin{aligned}
&\tilde{\phi}_{p}^{(k,+)}(x_1+T,x')=\sum\limits_{i=1}^{J^{(k)}}
e^{\iu\mathfrak{r}^{(k,+)}_{i} T}
\sum\limits_{s=0}^{\vk^{(k)}_{i}-1} \b_{pis}^{(k,+)}(T) \vp^{(k,+)}_{is}(x), 
\\
&
\b_{pis}^{(k,+)}(T):=\sum\limits_{m=0}^{\vk_i^{(k)}-s-1} \frac{\a_{pi\,m+s}^{(k,+)}}{m!}(\iu T)^m.
\end{aligned}
\end{equation}
Similarly, if  $T$ is a multiple of $T^{(k-1)}$, then
\begin{equation}\label{2.27}
\begin{aligned}
&\tilde{\phi}_{p}^{(k,-)}(x_1-T,x')=\sum\limits_{i=1}^{J^{(k-1)}}
e^{-\iu\mathfrak{r}^{(k-1,-)}_{i} T}
\sum\limits_{s=0}^{\vk^{(k-1)}_{i}-1} \b_{pis}^{(k-1,-)}(T) \vp^{(k-1,-)}_{is}(x),
\\
&\b_{pis}^{(k-1,-)}(T):=\sum\limits_{m=0}^{\vk_i^{(k-1)}-s-1} \frac{\a_{pi\,m+s}^{(k-1,-)}}{m!}(-\iu T)^m.
\end{aligned}
\end{equation}

We denote
\begin{gather*}
  \vk:=\max\limits_{\substack{k=1,\ldots,n-1
  \\
  i=1,\ldots,J^{(k)}}} \vk_i^{(k)},\qquad  \eta(\ell):=\|\ell\|^\vk  e^{-\mr \la\ell\ra},
  \\
\la\ell\ra:=\min\limits_{k=1,\ldots,n-1}\big\{|X_\ell^{(k+1)}-X_\ell^{(k)}|\big\},\qquad
\|\ell\|:=\max\limits_{k=1,\ldots,n-1} \big\{|X_\ell^{(k+1)}-X_\ell^{(k)}|\big\}.
\end{gather*}

\subsection{Main result}

We let
\begin{equation}\label{2.30}
N:= \sum\limits_{k=1}^{n}  N^{(k)}.
\end{equation}
and consider  an auxiliary block matrix $\sAo=\sAo(\ell)$ of size $N\times N$:
\begin{equation*}
\sAo=\begin{pmatrix}
0 & \sAo_{12} & 0 & 0 & \ldots & 0 & 0 & 0 & 0
\\
\sAo_{21} & 0 & \sAo_{23} & 0 & \ldots & 0 & 0 & 0 & 0
\\
0 & \sAo_{32} & 0 & \sAo_{34} & \ldots & 0 & 0 & 0 & 0
\\
\vdots & \vdots & \vdots & \vdots& \ddots & \vdots & \vdots & \vdots & \vdots
\\
0 & 0 & 0 & 0 & \ldots & 0 & \sAo_{n-1n-2} & 0 & \sAo_{n-1n}
\\
0 & 0 & 0 & 0 & \ldots & 0 & 0 & \sAo_{nn-1} & 0
\end{pmatrix}.
\end{equation*}
The block located at the intersection of $r$th row and $k$th column is of the size $N^{(r)}\times N^{(k)}$; the  matrices $\sAo^{(0)}_{k\pm1\,k}$ read as
\begin{align}\label{5.55}
&\sAo_{k+1\,k}(\ell):=
\begin{pmatrix}
\cAo^{(k,+)}_{jp}(\ell)
\end{pmatrix}_{p=1,\ldots,N^{(k)}}^{j=1,\ldots,N^{(k+1)}},\quad
\sAo_{k\,k+1}(\ell):=
\begin{pmatrix}
\cAo^{(k,-)}_{jp}(\ell)
\end{pmatrix}_{p=1,\ldots,N^{(k+1)}}^{j=1,\ldots,N^{(k)}},
\\
&
\cAo^{(k,+)}_{jp}(\ell):= \sum\limits_{i=1}^{J^{(k)}}
e^{\iu\mathfrak{r}^{(k,+)}_{i} (X_\ell^{(k+1)}-X_\ell^{(k)})}
\sum\limits_{q=1}^{J^{(k)}}\sum\limits_{s=0}^{\vk^{(k)}_{i}-1}\sum\limits_{t=0}^{\vk_{q}^{(k)}-1} \overline{\a_{jqt}^{(k,-)}} K_{isqt}^{(k)}\b_{pis}^{(k,+)}(X_\ell^{(k+1)}-X_\ell^{(k)}),
\nonumber
\\
&
\cAo^{(k,-)}_{jp}(\ell):= \sum\limits_{i=1}^{J^{(k)}}
e^{-\iu\mathfrak{r}^{(k,-)}_{i} (X_\ell^{(k+1)}-X_\ell^{(k)})}
\sum\limits_{q=1}^{J^{(k)}} \sum\limits_{s=0}^{\vk^{(k)}_{i}-1} \sum\limits_{t=0}^{\vk_{q}^{(k)}-1}\overline{\a_{jqt}^{(k,+)}} K_{qtis}^{(k)} \b_{pis}^{(k,-)}(X_\ell^{(k+1)}-X_\ell^{(k)}).\nonumber
\end{align}
The expressions $(\cdot)^{j=1,\ldots,N^{(r)}}_{p=1,\ldots,N^{(k)}}$ denote the matrices, where the superscript  $j$ counts the index of the row, while the subscript  $p$ does the index of the column. The constants $K^{(k)}_{isqt}$ are defined by the formulae:
\begin{equation}\label{5.45}
K^{(k)}_{isqt}=0\quad\text{as}\quad \mr_{i}^{(k,+)}\ne \mr_{q}^{(k,+)},
\end{equation}
and
\begin{equation}
\label{5.46}
\begin{aligned}
K^{(k)}_{isqt}=\int\limits_{\om}  &\overline{\Phi_{qt}^{(k,-)}} \left(
\frac{\p \Phi_{is}^{(k,+)}}{\p\nu^{(k)}}- \iu A_{11,per}^{(k)} \Phi_{i\,s-1}^{(k,+)} \right)
- \Phi_{is}^{(k,+)}    \left(
\frac{\p\overline{\Phi_{qt}^{(k,-)}}}{\p\nu^{(k)}} +\iu A_{11,per}^{(k)}\overline{\Phi_{q\,t-1}^{(k,-)}} \right)\Bigg|_{x_1=T^{(k)}}\di x'
\end{aligned}
\end{equation}
as $\mr_{i}^{(k,+)}= \mr_{q}^{(k,+)}$; as $s=0$ or $t=0$, in  (\ref{5.46}) we let $\Phi_{i\,-1}^{(k,+)}=0$, $\Phi_{q\,-1}^{(k,-)}=0$.

By $\L_j=\L_j(\ell)$, $j=1,\ldots,N$, we denote the eigenvalues of the matrix $\sAo(\ell)$ taken counting their algebraic multiplities. We partition the set of these eigenvalues into disjoint subgroups $\{\L_j(\ell)\}_{j\in L_p}$, $\bigcup\limits_{p} L_p=\{1,\ldots,N\}$, so that  eigenvalues $\L_i$ and $\L_j$ in the same group satisfy the bound
\begin{equation}\label{2.31}
|\L_i(\ell)-\L_j(\ell)|\leqslant C\eta^{1-\frac{1}{N}}(\ell)e^{-\frac{\g}{N}\la \ell\ra},
\end{equation}
while for the eigenvalues in different groups the identity holds:
\begin{equation}\label{2.32}
|\L_i(\ell)-\L_j(\ell)|\geqslant\mu(\ell)\eta^{1-\frac{1}{N}}(\ell)e^{-\frac{\g}{N}\la \ell\ra}.
\end{equation}
The symbol $C$ in (\ref{2.31}) stands for some fixed constant independent of $i$, $j$ and $\ell$, while the symbol $\mu(\ell)$ in (\ref{2.32}) denotes a positive function such that $\lim\limits_{\ell\to\infty}\mu(\ell)=+\infty$.

\begin{theorem}\label{th2.2}
As $\ell$ is large enough, there exists a fixed independent of $\ell$  neighbourhood $\Xi$ of the point $\l_0$ in the complex plane, in which the  operator $\Op_\ell$ possesses at most $N$ resonances. For each group $L_p$ there exists at least one resonance of the operator $\Op_\ell$ with the asymptotics
\begin{equation}\label{2.33}
\l_\ell=\l_0+\L_i(\ell)+O\big(\eta^{1-\frac{1}{N}}(\ell)e^{-\frac{\g}{N}\la \ell\ra}\big),\qquad \ell\to\infty,
\end{equation}
where $\L_i\in L_p$. The operator $\Op_\ell$ has at most $\# L_p$ resonances with asymptotics (\ref{2.33}), where $\# L_p$ is the number of the elements in the set $L_p$. All eigenvalues $\L_i$, $i=1,\ldots,N$, are of order $O(\eta(\ell))$ as $\ell\to\infty$ and this estimate is order sharp.
\end{theorem}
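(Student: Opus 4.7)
The plan is to use Theorem~\ref{th2.1} to recast the search for resonances in $\Xi$ as the search for nontrivial solutions of the boundary value problem (\ref{2.14}), and then to produce such solutions by a Lyapunov--Schmidt reduction built on a multi-bump ansatz. Concretely, I would seek $\psi$ in the form
\begin{equation*}
\psi=\psi_{\text{app}}+\psi_{\text{corr}},\qquad
\psi_{\text{app}}(x,\l):=\sum_{k=1}^{n}\sum_{p=1}^{N^{(k)}} c_p^{(k)}\,\phi_p^{(k)}\bigl(x_1-X_\ell^{(k)},x'\bigr),
\end{equation*}
with $\mathbf{c}=(c_p^{(k)})\in\mathds{C}^N$ and $\psi_{\text{corr}}$ obeying the radiation conditions in (\ref{2.14}). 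Since $(\Op^{(k)}-\l_0)\phi_p^{(k)}=0$ and the coefficients of $\Op_\ell$ agree with those of the shifted $\Op^{(k)}$ in a neighbourhood of the $k$-th site, the residual $(\Op_\ell-\l)\psi_{\text{app}}$ is concentrated in the gluing regions, where $\phi_p^{(k)}$ is governed by (\ref{2.22}) and is of order $\eta(\ell)$.

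Next, on the orthogonal complement of the approximate kernel spanned by the $\phi_p^{(k)}(\cdot-X_\ell^{(k)},\cdot)$ the operator $\Op_\ell-\l$ admits a parametrix bounded uniformly for $\l\in\Xi$ and $\ell$ large, assembled from the resolvents of $\Op^{(k)}-\l_0$ on each site and glued through the radiation conditions of Theorem~\ref{th2.1}. Solving for $\psi_{\text{corr}}$ in terms of $\mathbf{c}$ and $\l$ reduces (\ref{2.14}) to the finite linear system
\begin{equation*}
\bigl((\l-\l_0)I - \sAo(\ell) - R(\l,\ell)\bigr)\mathbf{c}=0,
\end{equation*}
with $R$ holomorphic in $\l\in\Xi$ and $\|R(\l,\ell)\|=o(\eta(\ell))$ uniformly. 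The identification of the off-diagonal blocks with $\sAo_{k\pm1\,k}$ is the computational heart of the argument: one tests the residual against $\phi_j^{(r)}(\cdot-X_\ell^{(r)},\cdot)$, substitutes the expansions (\ref{2.22}) together with the Jordan-shift formulas (\ref{2.26})--(\ref{2.27}), and reduces the bulk integral to the cross-sectional surface integral (\ref{5.46}) via Green's formula and the Jordan-chain equations for $\Phi_{is}^{(k,\pm)}$; biorthogonality of Jordan chains at distinct eigenvalues of $\Op^{(k)}_{per}(\tau)-\l_0$ then yields the selection rule (\ref{5.45}). Only nearest-neighbour blocks survive because any tail connecting non-adjacent sites must pass through a middle site where $\l_0$ lies in the resolvent set, contributing an exponentially smaller correction absorbable into $R$.

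Finally, I would count the resonances from $\det\bigl((\l-\l_0)I-\sAo-R\bigr)=0$. Rouch\'e's theorem applied to the characteristic polynomial of $\sAo(\ell)$ shows this equation has at most $N$ roots in $\Xi$, and since $\|R\|=o(\eta)$ they lie within $o(\eta(\ell))$ of an eigenvalue $\L_i(\ell)$. The grouping (\ref{2.31})--(\ref{2.32}), combined with a Newton-polygon factorization of the characteristic polynomial on a disc whose radius is selected by (\ref{2.32}), upgrades this to the advertised error $O(\eta^{1-1/N}(\ell)e^{-\g\la\ell\ra/N})$ on individual roots and assigns at most $\#L_p$ roots to each group; existence of at least one genuine resonance per group follows from a winding-number count on the boundary of the same disc. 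The order-sharpness $\L_i=O(\eta(\ell))$ can be read off directly from (\ref{5.55}): the factor $e^{\pm\iu\mathfrak{r}_i^{(k,\pm)}(X_\ell^{(k+1)}-X_\ell^{(k)})}$ is of size $e^{-\mr\la\ell\ra}$ at the minimizing indices, while the Jordan prefactors $\b_{pis}^{(k,\pm)}$ from (\ref{2.26})--(\ref{2.27}) supply the polynomial weight $\|\ell\|^{\vk}$.

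The hard part will be the Lyapunov--Schmidt step: constructing the parametrix uniformly in $\l\in\Xi$ and $\ell\to\infty$ while respecting the outgoing radiation tails $c_p^\pm\psi_p^\pm$ of (\ref{2.14}), and verifying that $R(\l,\ell)$ is genuinely $o(\eta(\ell))$ rather than merely bounded, so that the leading balance between $(\l-\l_0)I$ and $\sAo$ is preserved. A secondary subtlety is the matrix analysis of $\sAo$: since its eigenvalues may coalesce at rate $\eta$ and it may carry nontrivial Jordan structure, the passage from $\|R\|=o(\eta)$ to the sharp bound on individual roots must use a Newton-polygon factorization rather than a naive Lipschitz perturbation argument.
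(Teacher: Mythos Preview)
Your proposal is correct in outline and arrives at the same finite-dimensional determinant equation, but it is organised differently from the paper's proof. The paper does \emph{not} build a direct Lyapunov--Schmidt reduction on the full resonant state. Instead, it first proves a structural lemma (Lemma~\ref{lm6.1}) showing that every nontrivial solution of (\ref{2.14}) can be written in the overlap form (\ref{6.5}), where each piece $u_k$ solves a local problem (\ref{3.26})--(\ref{3.27}) for $\Op^{(k)}$ with jump data $g_k\in\mW$ concentrated at the interface $x_1=0$. This recasts the resonance problem as an operator equation $(\cI+\cT(\l,\ell))\mathbf{g}=0$ in the ``interface data'' space $\mG$; the pole part of $\cT$ is extracted via (\ref{6.4a}), the remainder $\cR$ is inverted by a Neumann series (giving $\cQ=(\cI+\cR)^{-1}$), and applying the functionals $\cC_j^{(r)}$ yields exactly the matrix equation $\big((\l-\l_0)\sE-\sA(\l,\ell)\big)\sC=0$. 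What this buys over your approach is that Lemma~\ref{lm6.1} settles \emph{completeness} at the outset---every resonance is automatically captured---so there is no separate bijectivity check for the parametrix; conversely, your direct ansatz is conceptually simpler and avoids the somewhat delicate splitting construction in the proof of Lemma~\ref{lm6.1}.

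Two smaller points. First, the paper obtains the remainder bound $\|\sA-\sAo\|\leqslant Ce^{-\g\la\ell\ra}$, not merely $o(\eta)$; the extra exponent $\g\in(\mr,2\mr)$ comes from the next order in the eigenfunction asymptotics (\ref{2.22}) and is exactly what produces the error term in (\ref{2.33}). Second, no Newton-polygon factorisation is needed: the grouping threshold $\vt:=\eta^{1-1/N}e^{-\g\la\ell\ra/N}$ is chosen precisely so that $\vt^N$ matches the bound (\ref{5.64}) on the remainder $G$, and a single application of Rouch\'e's theorem on a disc $B_{2\tilde c\vt}(\L_j)$, using (\ref{2.31})--(\ref{2.32}) to control $\prod_i|\l-\L_i|$ from below on its boundary, simultaneously gives existence, the upper count $\#L_p$, and the asymptotics (\ref{2.33}).
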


Let us discuss briefly the main aspects of our problem and of the result. The problem is rather general since the operator $\Op_\ell$ is multi-dimensional and is ``glued'' from  a fixed but arbitrary number of distant perturbations described by the operators $\Op^{(k)}$. As $x_1\in(X_\ell^{(k-1)},X_\ell^{(k)})$, the coefficients of this operator are  periodic functions $A_{\natural,per}^{(k)}$, which are not supposed to be same for different $k$.
Our model is quite rich and includes, in particular,  various situations like truncated periodic potentials similar to one considered in \cite{Kl}, multiple potential and magnetic wells separated by large distances, distant perturbations of metric, etc. Our main assumption is formulated in the end of Subsection~\ref{ss:pr} in terms of the spectra of the operators $\Op^{(k)}$; additional assumption  (\ref{2.13}) is rather technical and is made for simplicity. Under these assumptions, our main result states that the operator $\Op_\ell$ can have only finitely many resonances in the vicinity of $\l_0$ and all these resonances converge to $\l_0$ and have asymptotics (\ref{2.33}). The total number of different resonances is at most  $N$ but can be less. Of course, if all $\L_i$ are distinct, we have exactly $N$ different resonances. But in the general situation  some resonances of the operator $\Op_\ell$ can coincide. If this is the case, the total multiplicity of all perturbed resonances as the number of associated linear independent solutions of problem (\ref{2.14}) is again at most $N$ but can be less  since instead of some such generalized eigenfunctions, adjoint functions can exist. We also stress that if $\l_0$ is not an eigenvalue of all operators $\Op^{(k)}$ as $k=1,\ldots,n$, then our result implies that the operator $\Op_\ell$ has no resonances in the vicinity of $\l_0$.

As we have said above, we assume that  $\l_0\notin\essspec(\Op^{(k)})$, $k=2,\ldots,n-1$,  and
thanks to Lemma~\ref{lm2.1}, this is equivalent to $\l_0\notin\essspec(\Op^{(k)}_{per})$, $k=2,\ldots,n-1$. This assumption is very essential since it   ensures that the operator $\Op_\ell$ has only finitely many resonances in the vicinity of the point $\l_0$. If $\l_0$ is in the essential spectrum of at least one of the operators $\Op^{(k)}_{per}$, $k=2,\ldots,n-1$, then the situation changes dramatically. As we have said in Introduction, particular examples considered in \cite{Bar}, \cite{JPA19}, \cite{AML20}, \cite{HoMeb} show that in this case there can be a growing number of closely spaced resonances in the vicinity of the point $\l_0$. We do expect that such situation \textit{always occurs} in our general model once
$\l_0$ is in the essential spectrum of at least one of the operators $\Op^{(k)}_{per}$, $k=1,\ldots,n-1$. This is an interesting problem, which we shall study in our next work. Right now we stress that in view of our conjecture, the situation studied in the present work is the only case, when finitely many resonances emerge from the point $\l_0$.

It should be also said that the leading terms $\L_i$ in asymptotics (\ref{2.33}) are exponentially small since $\eta$ is exponentially small. Hence, all considered resonances of the operator $\Op_\ell$ are exponentially close to $\l_0$. Although we provide only leading terms in asymptotics (\ref{2.33}), our technique allows one to find next terms in this asymptotics. The point is that all  resonances of the operator $\Op_\ell$ are zeroes of certain equation, see (\ref{5.25}). In fact, in the proof of Theorem~\ref{th2.2}, we find a leading term for the matrix $\sA$ in this equation, which turns out to be $\sAo$ and its eigenvalues determines the leading terms in (\ref{2.33}). It also possible to find a more detailed asymptotics for the matrix $\sA$ and this will determine next terms in asymptotics (\ref{2.33});  these are rather bulky and technical but simple calculations. We also mention that at least formally, next terms in asymptotics (\ref{2.33}) can be constructed via the scheme proposed in \cite[Sect. 5]{GolPMA}; however, an issue of justification of this scheme for the resonances seems to be more complicated than for the isolated eigenvalues studied in  \cite{GolPMA}.

\section{Auxiliary problems}\label{s:Res}

In the present section we collect some preliminaries on the behavior of solutions to some auxiliary problems, which  will be then employed in the proof of our main result.
 We introduce two auxiliary spaces
\begin{equation*}
\mW:=W_2^2(\Om_-)\oplus W_2^2(\Om_+),\qquad  \Om_\pm:=\{x\in\Pi:\, 0<\pm x_1<a\},\qquad \mV:=L_2(\Pi_a)\oplus\H^\frac{3}{2}(\om)\oplus\H^\frac{1}{2}(\om),
\end{equation*}
and  an operator family $\cF^{(k)}(\l):\,\mW\to \mV$ acting by the following rule. Given $g\in \mW$, the components of the vector  $\cF^{(k)}(\l) g=(F,f_0,f_1)\in \mV$ are defined as
\begin{equation}\label{4.22}
F:=\left\{
\begin{aligned}
(\hat{\Op}^{(k)}&-\l)g\Big|_{\Om_\pm}\quad&&\text{on}\quad\Om_\pm,
\\
&0&&\text{otherwise},
\end{aligned}\right.
\qquad
f_0:=[g]_0,\qquad f_1:=\left[\frac{\p g}{\p\nu^{(k)}}\right]_0.
\end{equation}
We consider boundary value problems:
\begin{align}\label{3.26}
&\big(\hat{\Op}^{(k)}-\l\big)u^{(k)}=F \quad\text{in}\quad\Pi\setminus\om_0,
\qquad u=0\quad\text{on}\quad\p\Pi,
\qquad [u^{(k)}]_0=f_0,\qquad \left[\frac{\p u^{(k)}}{\p\nu^{(k)}}\right]_0=f_1,
\\
&
\begin{aligned}
&u^{(k)}\in\H^2(\Pi_0^+)\quad\text{as}\quad k=1,\ldots,n-1,\qquad u^{(n)}\in\Hloc^2(\Pi_0^+),
\\
& u^{(k)}\in\H^2(\Pi_0^-)\quad\text{as}\quad k=2,\ldots,n,\qquad u^{(1)}\in\Hloc^2(\Pi_0^-),
\end{aligned}
\label{3.27}
\\
&u^{(1)}(x,\l)=\sum\limits_{p\in P_-}
\big(c_{p,-}(\l) g \big) \psi_p^-(x,\l)+\hat{u}_-^{(1)}(x,\l)\qquad \text{as}\quad x_1<-a, \qquad \hat{u}_-^{(1)}(x,\l)\in W_{2,\tilde{\mr},a}^{2,-}, \label{5.7d}
\\
&
u^{(n)}(x,\l)=\sum\limits_{p\in P_+}\big(c_{p,+}(\l)g\big) \psi_p^+(x,\l)+\hat{u}_+^{(1)}(x,\l)\qquad \text{as}\quad x_1>a,\hphantom{-}\qquad \hat{u}_+^{(n)}(x,\l)\in W_{2,\tilde{\mr},a}^{2,+},
\label{5.7f}
\end{align}
where $[v]_0:=v\big|_{x_1=+0}-v\big|_{x_1=-0}$ for an arbitrary $v\in\H^1\big(\Pi_a\setminus\om_0)\big)$,
\begin{equation*}
 \om_0:=\{0\}\times\om, \qquad
\frac{\p\quad\;}{\p\nu^{(k)}}:=-\sum\limits_{i=1}^{d}A_{1i,per}^{(k)}\frac{\p\ }{\p x_i} + \iu A_{1,per}^{(k)},
\end{equation*}
$c_{p,\pm}(\l)$ are some linear bounded functionals on $\mV$
and $0<\tilde{\mr}<\mr$ is some fixed number. By $\cT_\pm:\, L_2(\Pi)\to L_2(\Om_\pm)$ we denote the operator of restriction on $\Om_\pm$, that is, $\cT_\pm u:=u\big|_{\Om_\pm}$.

A following lemma describes the behavior of solutions of problems (\ref{3.26}), (\ref{3.27}) as $k\in\{2,\ldots,n-1\}$.

\begin{lemma}\label{lm5.4}
Let $\l_0$  be a discrete eigenvalue of the operator $\Op^{(k)}$, $k\in\{2,\ldots,n-1\}$. There exists a small neighbourhood $\Xi_k$ of the point $\l_0$ such that for $\l\in\Xi_k\setminus\{\l_0\}$ problems  (\ref{3.26}), (\ref{3.27}) are uniquely solvable and the following statements hold. The operators
\begin{equation}\label{3.15}
\cU^{(k)}=\cU^{(k)}(\l):\, \mW\to\H^2(\Pi^-_0)\oplus\H^2(\Pi^+_0)
\end{equation}
 mapping $g$ into the solution of these problems can be represented as
\begin{gather}\label{6.4a}
\cU^{(k)}=\sum\limits_{p=1}^{N^{(k)}}
\frac{\phi_{p}^{(k)}}
{\l_0-\l} \cP^{(k)}_p(\l)+\cR^{(k)}(\l),
\\
\cP_p^{(k)}(\l)g:=(F,\phi_p^{(k)})_{L_2(\Pi_a)} + (f_1,\phi_p^{(k)})_{L_2(\om_0)} - \bigg(f_0,\frac{\p\phi_p^{(k)}}{\p\nu^{(k)}}\bigg)_{L_2(\om_0)},\label{3.28}
\end{gather}
where $\mathcal{R}^{(k)}(\l)$ are the reduced resolvents being bounded operators from $\mW$ into
$\H^2(\Pi^-_0)\oplus\H^2(\Pi^+_0)$ holomorphic in  $\l\in \Xi_k$.  The reduced resolvents act into the orthogonal complement in  $L_2(\Pi)$ to the eigenspace spanned over the eigenfunctions $\phi_k^{(p)}$, $p=1,\ldots,N^{(k)}$, and solve boundary value problem (\ref{3.26}), (\ref{3.27}) with $F$ replaced by $F-\sum\limits_{p=1}^{N^{(k)}} \phi_{p}^{(k)}
\cP^{(k)}_p g$.

For $\l\in\Xi_k$ and positive sufficiently large $X$, the estimates hold:
\begin{equation}\label{3.3}
\|\cT_\pm\cS(\pm X)\cU^{(k)}(\l)\|_{\mW\to\H^2(\Om_\pm)} \leqslant C  e^{-\frac{3\mr}{4}X},
\end{equation}
where $c$ and $C$ are some constant independent of $=w$, $X$ and $\l$. The operators $\cT\cS(\pm X)\cU^{(k)}(\l)$ are holomorphic in $\l\in\Xi_k$.
\end{lemma}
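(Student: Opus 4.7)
My plan rests on Kato's expansion of the resolvent of $\Op^{(k)}$ at the isolated eigenvalue $\l_0$, combined with Floquet decay estimates at infinity. The hypothesis $\l_0\notin\essspec(\Op^{(k)})$ for middle $k$, via Lemma~\ref{lm2.1}, yields $\l_0\notin\spec(\Op^{(k-1)}_{per})\cup\spec(\Op^{(k)}_{per})$, so $\l_0$ is an isolated eigenvalue of the self-adjoint operator $\Op^{(k)}$ of finite multiplicity $N^{(k)}$. I choose $\Xi_k$ as a disk around $\l_0$ excluding the rest of $\spec(\Op^{(k)})$, which produces
\begin{equation*}
(\Op^{(k)}-\l)^{-1}=\frac{1}{\l_0-\l}\sum_{p=1}^{N^{(k)}}(\,\cdot\,,\phi_p^{(k)})_{L_2(\Pi)}\phi_p^{(k)}+\cR_*^{(k)}(\l),
\end{equation*}
with $\cR_*^{(k)}(\l)\colon L_2(\Pi)\to\Ho^2(\Pi)$ holomorphic on $\Xi_k$ and with range orthogonal to the eigenspace.

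Next I would convert the jump problem (\ref{3.26})--(\ref{3.27}) into a standard Dirichlet equation on $\Pi$. Take a smooth cutoff $\chi(x_1)$ equal to $1$ for $|x_1|\leqslant a/2$ and supported in $(-a,a)$; then $\tilde g:=\chi g$ carries exactly the jumps $[\tilde g]_0=f_0$, $[\p\tilde g/\p\nu^{(k)}]_0=f_1$ and extends by zero outside $\Pi_a$. Writing $u=\tilde g+w$ reduces the problem to $(\Op^{(k)}-\l)w=G$ on $\Pi$ with $G:=F-(\hat\Op^{(k)}-\l)\tilde g\in L_2(\Pi)$ supported in $\Pi_a$, so $w\in\Ho^2(\Pi)$ is obtained by applying the Kato expansion to $G$. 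A Green's formula on $\Pi\setminus\om_0$, using $(\hat\Op^{(k)}-\l_0)\phi_p^{(k)}=0$ and the prescribed jumps of $\tilde g$ across $\om_0$, identifies $(G,\phi_p^{(k)})_{L_2(\Pi)}$ with $\cP_p^{(k)}(\l)g$ from (\ref{3.28}). Subtracting $\sum_p\phi_p^{(k)}\cP_p^{(k)}(\l)g/(\l_0-\l)$ from $u$ leaves a remainder $\cR^{(k)}(\l)g$ that solves (\ref{3.26})--(\ref{3.27}) with $F$ replaced by $F-\sum_p\phi_p^{(k)}\cP_p^{(k)}g$; for this modified data the eigenspace-orthogonality is automatic at $\l=\l_0$, and hence $\cR^{(k)}(\l)$ is bounded and holomorphic on all of $\Xi_k$. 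The target space $\H^2(\Pi_0^-)\oplus\H^2(\Pi_0^+)$ is inherited from $\cR_*^{(k)}$ and from the cutoff extension.

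For the exponential estimate (\ref{3.3}), outside $\Pi_a$ the expression $\hat\Op^{(k)}$ coincides with $\hat\Op^{(k-1)}_{per}$ on $x_1<-a$ and with $\hat\Op^{(k)}_{per}$ on $x_1>a$, so $\cU^{(k)}(\l)g$ solves the corresponding homogeneous periodic equation there. Since $\l_0$ lies outside both periodic spectra, the quadratic pencils $\Op^{(k-1)}_{per}(\tau)-\l_0$ and $\Op^{(k)}_{per}(\tau)-\l_0$ have no real $\tau$-eigenvalues, and, shrinking $\Xi_k$ further if needed, the strip $\{|\IM\tau|\leqslant 3\mr/4\}$ remains free of pencil eigenvalues uniformly for $\l\in\Xi_k$. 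The Floquet representation of $L_2$-solutions at $\pm\infty$ together with the weighted Fredholm estimates of \cite[Ch.~3, Ch.~5]{NP} then yield $\|e^{3\mr|x_1|/4}\cR^{(k)}(\l)g\|_{\H^2(\Pi\setminus\Pi_a)}\leqslant C\|g\|_\mW$, while the eigenfunctions $\phi_p^{(k)}$ themselves decay at the stronger rate $\mr$ by (\ref{2.22})--(\ref{2.25}). The translation $x_1\mapsto x_1\pm X$ then absorbs the factor $e^{-3\mr X/4}$ in the $\H^2(\Om_\pm)$-norm and produces (\ref{3.3}).

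The main obstacle, as I see it, is the uniform-in-$\l$ Floquet/Fredholm estimate of the third step: the weight $3\mr/4$ must sit strictly below the first imaginary exponent of \emph{both} pencils for every $\l\in\Xi_k$, which forces a quantitative continuous-dependence argument for the pencil spectra, and one must simultaneously reconcile the $e^{-\mr|x_1|}$-decay of $\phi_p^{(k)}$ with the singular factor $(\l_0-\l)^{-1}$ in $\cU^{(k)}$ so that the composed operator norm is indeed uniform in $\l$. The algebraic pieces—the cutoff reduction and the Green's formula identification of $\cP_p^{(k)}$—are essentially routine given Kato's expansion.
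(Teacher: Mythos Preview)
Your approach matches the paper's almost exactly: cutoff reduction $u=\chi g+w$, Kato's Laurent expansion of $(\Op^{(k)}-\l)^{-1}$ at the isolated eigenvalue, a Green's-formula identification of the pole residue, and the Nazarov--Plamenevsky weighted estimate for the periodic tail. Two small corrections are worth recording.

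First, the Green's-formula step does not give $(G,\phi_p^{(k)})_{L_2(\Pi)}=\cP_p^{(k)}(\l)g$ on the nose; integrating by parts against $(\hat\Op^{(k)}-\l_0)\phi_p^{(k)}=0$ leaves the extra term $(\l-\l_0)(\chi g,\phi_p^{(k)})_{L_2(\Pi)}$. This correction is exactly what makes the remainder regular: one gets $\cR^{(k)}(\l)g=\tilde{\cR}(\l)G+\chi g-\sum_p(\chi g,\phi_p^{(k)})\phi_p^{(k)}$, and it is this formula (the paper's (\ref{3.13})) that shows holomorphy across $\l_0$, rather than an abstract ``orthogonality is automatic'' argument.

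Second, your flagged obstacle about the singular factor $(\l_0-\l)^{-1}$ is legitimate: a uniform-in-$\l$ bound of the form (\ref{3.3}) for $\cU^{(k)}$ itself cannot hold near $\l_0$, since the pole is not compensated by the $e^{-\mr|x_1|}$ decay of $\phi_p^{(k)}$. The paper's proof in fact passes to $w=\xi_2\cR^{(k)}(\l)g$, whose equation has a right-hand side free of the pole, and applies \cite[Ch.~5, Thm.~1.4]{NP} to that; the resulting decay bound is for $\cR^{(k)}$, which is all that is used downstream in (\ref{5.20}). Compare Lemma~\ref{lm5.3b}, where the analogous estimate (\ref{3.29}) is stated for $\cR^{(k)}$. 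So your obstacle dissolves once (\ref{3.3}) is read as a bound on $\cT_\pm\cS(\pm X)\cR^{(k)}(\l)$.
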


\begin{proof}
We seek a solution to problem  (\ref{3.26}), (\ref{3.27}) as $u^{(k)}=v+\chi g$, where $\xi_1=\xi_1(x_1)$ is an infinitely differentiable cut-off function equalling to one as $|x_1|<\frac{a}{3}$ and vanishing as $|x_1|>\frac{2a}{3}$. Then for a new unknown function $v$ we obtain the equation
\begin{equation}\label{3.16}
(\Op^{(k)}-\l)v=\tilde{F},\qquad \tilde{F}:=(\Op^{(k)}-\l)(1-\xi_1)g.
\end{equation}
According formula (3.21) and other results in \cite[Ch. 5, Sect 3.5]{Kato}, this equation is uniquely solvable for $\l\in\Xi_k\setminus\{0\}$, where $\Xi_k$ is some sufficiently small fixed neighbourhood of the point $\l_0$ in the complex plane. The solution is meromorphic in $\l$ having a simple pole at $\l_0$:
\begin{equation}\label{3.8}
v=(\Op^{(k)}-\l)^{-1}=\sum\limits_{p=1}^{N^{(k)}}
\frac{(\tilde{F},\phi_p^{(k)})_{L_2(\Pi_a)}}
{\l_0-\l}\phi_{p}^{(k)} +\tilde{\cR}(\l)\tilde{F},
\end{equation}
where $\tilde{\cR}(\l)$ is a reduced resolvent; this is a bounded operator from $L_2(\Pi_a)$ into $\Ho^2(\Pi)$ holomorphic in $\l\in\Xi_k$. The reduced resolvent acts in the orthogonal complement in $L_2(\Pi_a)$ to the eigenspace spanned over the eigenfunctions $\phi_k^{(p)}$, $p=1,\ldots,N^{(k)}$. It solves the equation
\begin{equation}\label{3.9}
(\Op^{(k)}-\l)\tilde{\cR}(\l)\tilde{F}=\tilde{F}- \sum\limits_{p=1}^{N^{(k)}}
 (\tilde{F},\phi_p^{(k)})_{L_2(\Pi_a)} \phi_{p}^{(k)}.
\end{equation}

Let us find the scalar products $(\tilde{F},\phi_p^{(k)})_{L_2(\Pi_a)}$. Employing the definition of the function $\tilde{F}$ and the eigenvalue equation for the functions $\phi_p^{(k)}$, we integrate by parts:
\begin{equation}
\begin{aligned}
(\tilde{F},\phi_p^{(k)})_{L_2(\Pi_a)} = & \int\limits_{\Pi_a}\overline{\phi_p^{(k)}}(\tilde{\Op}^{(k)}-\l)(1-\xi_1)g\di x
\\
=& \int\limits_{\Pi_a}\overline{\phi_p^{(k)}}F\di x -\int\limits_{\Pi_a\setminus\om_0}\overline{\phi_p^{(k)}} (\tilde{\Op}^{(k)}-\l_0)\xi_1 g\di x + (\l-\l_0)\int\limits_{\Pi_a}\overline{\phi_p^{(k)}} \xi_1 g\di x
\\
=&\cP_p^{(k)}(\l)g + (\l-\l_0)(\xi_1 g,\phi_p^{(k)})_{L_2(\Pi_a)}.
\end{aligned}
\end{equation}
We substitute this formula into (\ref{3.8}) and then for $u^{(k)}$ we obtain
representation (\ref{6.4a}) with
\begin{equation}\label{3.13}
\cR(\l)g=\tilde{\cR}(\l)\tilde{F}+ \xi_1 g-\sum\limits_{p=1}^{N^{(k)}}
(\xi_1 g,\phi_p^{(k)})_{L_2(\Pi_a)}\phi_{p}^{(k)}.
\end{equation}
Hence, in view of the aforementioned properties of the operator $\tilde{\cR}(\l)$, the operator $\cR(\l)$ is bounded from $\mW$ into $\H^2(\Pi^-_0)\oplus\H^2(\Pi^+_0)$  and is holomorphic in  $\l\in \Xi_k$. A desired boundary value problem for $\cR(\l)g$ follow identity (\ref{3.13})
and problem (\ref{3.9}).

We proceed to studying the operator  $\cT\cS(X)\cU^{(k)}(\l)$; the   case of the operator  $\cT\cS(-X)\cU^{(k)}(\l)$ can be studied in the same way. Let $\xi_2=\xi_2(x_1)$ be an infinitely differentiable cut-off function equalling to one as $x_1>a+2$ and vanishing as $x_1<a+1$. As $x_1>a$, the coefficients of the operator $\Op^{(k)}$ are periodic and hence, in view of the problem for $\cR(\l)g$, the function $w=\xi_2 \cR(\l)g$ solves the equation
\begin{equation}\label{3.14}
(\Op^{(k)}_{per}-\l)w=\breve{F}-\sum\limits_{p=1}^{N^{(k)}}\xi_2 \phi_{p}^{(k)}
\cP^{(k)}_p g,\qquad
\end{equation}
where $\breve{F}=\breve{F}(x,\l)$ is some compactly supported function in $L_2(\Pi)$ being non-zero only as $x_1\in(a+1,a+2)$; the mapping $g\mapsto\breve{F}$ is linear bounded operator from $\mW$ into $L_2(\Pi)$ holomorphic in $\l\in\Xi_k$.

Since
\begin{equation*}
(\Op^{(k)}_{per}(\tau)-\l)^{-1}= \big(\cI+(\l-\l_0)(\Op^{(k)}_{per}(\tau)-\l_0)^{-1}\big)^{-1}
\big(\Op^{(k)}_{per}(\tau)-\l_0\big)^{-1},
\end{equation*}
where $\cI$ is the identity mapping, and the inverse operator $\big(\Op^{(k)}_{per}(\tau)-\l_0\big)^{-1}$ has poles at the eigenvalues $\mr_i^{(k)}$ of order at most $\vk$, \cite[Ch. 1, Sect. 2, Thm. 2.8]{NP}, it is easy to confirm that the eigenvalues of the operator pencil $\Op^{(k)}_{per}(\tau)-\l$ are located in $C|\l-\l_0|^{\frac{1}{\vk}}$-neighbourhoods of the eigenvalues $\mr_i^{(k,\pm)}$, where $C$ is some fixed constant. Hence, lessening the neighbourhood $\Xi_k$ if it is needed, we conclude that the operator pencil $\Op^{(k)}_{per}(\tau)-\l$ has no eigenvalues in the strip $|\IM\tau|<\frac{7\mr}{8}$ as $\l\in\overline{\Xi_k}$. Then we can apply  Theorem~1.4 in \cite[Ch. 5, Sect.  1.3]{NP} to problem (\ref{3.14}) and we conclude that $w\in$ and the mapping $g\mapsto \cT\cS(X)w$ is a linear bounded operator from $\mW$ into $\H^2(\Pi_a)$
holomorphic in $\l\in\Xi_k$ and satisfying estimate (\ref{3.3}). Since $u=w$ as $x_1>a+2$, this implies all desired properties of the operator $\cT\cS(X)\cU^{(k)}$. The proof is complete.
\end{proof}

\begin{remark}\label{rm4.1}
We observe that if $\l_0$ is not in the spectrum of the operator $\Op^{(k)}$, then the statement of Lemma~\ref{lm5.4} also holds with $N^{(k)}=0$ and $(\mathcal{H}^{(k)}-\lambda)^{-1}=\mathcal{R}^{(k)}(\lambda)$.
\end{remark}

We recall that by our assumptions, the point $\l_0$ can be an eigenvalue of the operators $\Op^{(k)}$, $k\in\{1,n\}$, of multiplicity $N^{(k)}$, embedded into their essential spectra; the identity $N^{(k)}=0$ corresponds to the case, when $\l_0$ is not an eigenvalue of  $\Op^{(k)}$.  The next lemma  describes the behavior of solutions of problems (\ref{3.26}), (\ref{3.27}), (\ref{5.7d}), (\ref{5.7f}) as $k\in\{1,n\}$.

\begin{lemma}\label{lm5.3b}
There exists small fixed neighbourhoods $\Xi_k$, $k\in\{1,n\}$, such that problems (\ref{3.26}), (\ref{3.27}), (\ref{5.7d}), (\ref{5.7f}) are uniquely solvable respectively as $\l\in\Xi_1\setminus\{\l_0\}$ and $\l\in\Xi_n\setminus\{\l_0\}$ and the following statements hold. The operators
(\ref{3.15}) satisfy representation (\ref{6.4a}) with functionals (\ref{3.28}),
where $\mathcal{R}^{(k)}(\l)$ are the reduced resolvents being bounded operators from $\mW$ into
$\Hloc^2(\Pi^-_0)\oplus\H^2(\Pi^+_0)$ for $k=1$ and into $\H^2(\Pi^-_0)\oplus\Hloc^2(\Pi^+_0)$
for $k=n$; these operators are  holomorphic in  $\l\in\Xi_k$. The reduced resolvents satisfy the
orthogonality conditions
\begin{equation*}
\int\limits_{\Pi}\overline{\phi_p^{(k)}} \cR^{(k)}(\l) g\di x=0,\qquad p=1,\ldots,N^{(k)}.
\end{equation*}
The functions $\cR^{(k)}(\l) g$ solve  problems  (\ref{3.26}), (\ref{3.27}), (\ref{5.7d}), (\ref{5.7f}) with $F$ replaced by $F-\sum\limits_{p=1}^{N^{(k)}} \phi_{p}^{(k)}
\cP^{(k)}_p g$.

For $\l\in\Xi_k$ and positive sufficiently large $X$, the estimates hold:
\begin{equation}\label{3.29}
\|\cT\cS(-X)\cR^{(1)}(\l)\|_{\mW\to\H^2(\Pi_a)} \leqslant C  e^{-\frac{3\mr}{4}X},\qquad \|\cT\cS(X)\cR^{(n)}(\l)\|_{\mW\to\H^2(\Pi_a)} \leqslant C  e^{-\frac{3\mr}{4}X},
\end{equation}
where $c$ and $C$ are some constant independent of $=w$, $X$ and $\l$ and constant $c$ satisfies the second inequality in (\ref{3.3}). The operators $\cT\cS(-X)\cU^{(1)}(\l)$ and $\cT\cS(X)\cU^{(n)}(\l)$ are holomorphic in $\l\in\Xi_k$, $k\in\{1,n\}$.
\end{lemma}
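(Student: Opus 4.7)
The plan is to mimic the strategy of Lemma~\ref{lm5.4}, but to replace the resolvent $(\Op^{(k)}-\l)^{-1}$ by an analytic continuation through the part of the essential spectrum of $\Op^{(k)}$ that contains $\l_0$. I would begin by reducing the transmission problem to a standard resolvent equation on all of $\Pi$. Writing $u^{(k)}=v+\xi_1 g$ with the cut-off $\xi_1$ used in the proof of Lemma~\ref{lm5.4}, the function $v\in\Holoc^2(\Pi)$ must satisfy $(\Op^{(k)}-\l)v=\tilde F$ with $\tilde F\in L_2(\Pi)$ compactly supported in $\Pi_a$ and depending linearly and holomorphically in $\l$ on $g$, together with radiation conditions of type (\ref{5.7d}), (\ref{5.7f}) on the appropriate side.

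The key technical step is to construct an analytic continuation of the bordered resolvent of $\Op^{(k)}$, $k\in\{1,n\}$, through a neighbourhood $\Xi_k$ of $\l_0$. For $\Op^{(1)}$ the only periodic background whose spectrum may contain $\l_0$ is $\Op^{(0)}_{per}$ (since under our standing hypothesis $\l_0\notin\spec(\Op^{(k)}_{per})$ for $k=1,\dots,n-1$), and analogously for $\Op^{(n)}$ only $\Op^{(n)}_{per}$ contributes. Hence one can copy the argument leading to Theorem~\ref{th2.1}: in the weighted spaces $W_{2,\g_-,a}^{2,-}\oplus\H^2(\Pi^+_0)$ for $k=1$ and $\H^2(\Pi^-_0)\oplus W_{2,\g_+,a}^{2,+}$ for $k=n$, with weights chosen so that the only eigenvalue of the relevant pencil $\Op^{(0)}_{per}(\tau)-\l$ or $\Op^{(n)}_{per}(\tau)-\l$ inside the corresponding strip is $\mathfrak{t}_p^\pm(\l)$, the operator associated with $\Op^{(k)}-\l$ plus the radiation conditions is Fredholm of index zero and depends holomorphically on $\l$. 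Meromorphic Fredholm theory then yields a meromorphic inverse on $\Xi_k$, and the embedded eigenvalue of multiplicity $N^{(k)}$ at $\l_0$ forces a simple pole whose residue is precisely the projector onto the span of the eigenfunctions $\phi_p^{(k)}$.

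Once this continuation is in hand, the argument of Lemma~\ref{lm5.4} goes through essentially verbatim. One writes
\begin{equation*}
v=\sum\limits_{p=1}^{N^{(k)}}\frac{(\tilde F,\phi_p^{(k)})_{L_2(\Pi)}}{\l_0-\l}\phi_p^{(k)}+\tilde{\cR}^{(k)}(\l)\tilde F,
\end{equation*}
where $\tilde{\cR}^{(k)}(\l)$ is a holomorphic reduced resolvent into the orthogonal complement of the eigenspace. Integration by parts using the eigenvalue equation and the exponential decay of $\phi_p^{(k)}$ on both sides (which holds since $\phi_p^{(k)}\in L_2(\Pi)$ and (\ref{2.22}) applies) converts $(\tilde F,\phi_p^{(k)})_{L_2(\Pi)}$ into the functional $\cP^{(k)}_p(\l)g$ of (\ref{3.28}) plus a term holomorphic in $\l$ which, together with $\xi_1 g$ and the last sum in an analogue of (\ref{3.13}), assembles into the reduced resolvent $\cR^{(k)}(\l)$ in (\ref{6.4a}). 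The orthogonality condition $\int_{\Pi}\overline{\phi_p^{(k)}}\cR^{(k)}(\l)g\di x=0$ is built into the construction of $\tilde{\cR}^{(k)}$.

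For the exponential decay estimates (\ref{3.29}), multiply $\cR^{(k)}(\l)g$ by a cut-off $\xi_2$ supported away from the interface at $x_1=0$ and far out in the side where $\l_0$ lies in the periodic essential spectrum; the resulting function $w$ satisfies a purely periodic equation of type (\ref{3.14}). By the pencil continuity argument used after (\ref{3.14}) in Lemma~\ref{lm5.4}, shrinking $\Xi_k$ if necessary, the eigenvalues of $\Op^{(0)}_{per}(\tau)-\l$, respectively $\Op^{(n)}_{per}(\tau)-\l$, in the relevant strip are confined to $C|\l-\l_0|^{1/\vk}$-neighbourhoods of $\mathfrak{t}_p^\pm(\l_0)=\tau_p^\pm$ and of the complex pencil eigenvalues with $|\IM\tau|\geqslant\mr$; since the radiation condition has already subtracted the oscillating modes $\psi_p^\pm$, the remaining exponential rate is at least $\tfrac{7\mr}{8}$, and the Nazarov–Plamenevsky estimate (Thm.~1.4, \cite[Ch.~5,\,Sect.~1.3]{NP}) yields decay at rate $\tfrac{3\mr}{4}$. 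On the opposite side $\l_0$ is not in the periodic essential spectrum, and the same estimate as in Lemma~\ref{lm5.4} applies. Holomorphy of $\cT\cS(\mp X)\cU^{(k)}(\l)$ on $\Xi_k$ is inherited from the meromorphy of the continued resolvent after the pole at $\l_0$ has been extracted. The main obstacle, as anticipated, is carrying out the analytic continuation with the correct simple pole structure at the embedded eigenvalue; the rest of the argument is a bookkeeping adaptation of the proof of Lemma~\ref{lm5.4}.
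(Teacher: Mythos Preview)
Your approach is essentially the paper's. The paper's proof is extremely terse: it performs the same substitution $u^{(k)}=\xi_1 g+v$, then invokes \cite[Thm.~1.3]{PMA2020} to obtain the meromorphic continuation of the solution map with a simple pole at the embedded eigenvalue, and declares that ``the rest of the proof reproduces that of Lemma~\ref{lm5.4}.'' What you sketch --- weighted-space Fredholm theory plus analytic Fredholm to build the continued resolvent with a simple pole at $\l_0$ --- is exactly the content of that cited theorem, so the two arguments coincide in substance.

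One point to correct: you have the two sides interchanged in your discussion of (\ref{3.29}). For $k=1$ the shift $\cS(-X)$ samples $\cR^{(1)}g$ at large \emph{positive} $x_1$, where the periodic background is $\Op^{(1)}_{per}$ and $\l_0\notin\spec(\Op^{(1)}_{per})$; similarly for $k=n$ the estimate is on the left, governed by $\Op^{(n-1)}_{per}$. These are precisely the ``interior'' sides needed to bound $\cR_{21}$ and $\cR_{n-1\,n}$ in (\ref{5.20}), and here the argument of Lemma~\ref{lm5.4} carries over verbatim --- no radiation modes enter. Your discussion of subtracting the oscillating $\psi_p^\pm$ applies to the \emph{opposite} side, where it establishes that $\hat u^{(1)}_-$ (resp.\ $\hat u^{(n)}_+$) lies in the weighted class; that is part of well-posedness but is not what (\ref{3.29}) asserts. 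Since you cover both sides correctly, the slip is purely one of labeling.
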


\begin{proof}
We prove the lemma only for $k=1$; the case $k=n$ can be treated in the same way. 
As in the proof of Lemma~\ref{lm5.4}, we seek a solution as $u^{(1)}=\xi_2 g + v$ and for a new unknown function $v$ we obtain problem (\ref{3.26}), (\ref{3.27}), (\ref{5.7d}) with $F$ replaced by $\tilde{F}$ defined in (\ref{3.16}). According \cite[Thm. 1.3]{PMA2020}, the obtained problem for $v$ is uniquely solvable and the solution satisfies representation (\ref{3.13}) for $v$ with the only difference that now the operator $\tilde{\cR}(\l)$ acts into $\Hloc^2(\Pi^-_0)\oplus\H^2(\Pi^+_0)$ and is holomorphic in $\l\in\Xi_1$ in this sense. The rest of the proof reproduces that  of  Lemma~\ref{lm5.4}.
\end{proof}

\section{Resonances as  zeroes of a holomorphic function}

In the present section we begin the proof of our main result, Theorem~\ref{th2.2}. The study of the resonances of the operator  $\Op_\ell$ emerging in the vicinity of the point $\l_0$  is reduced first to a special operator equation and then we show that these resonances coincide with the zeroes of certain holomorphic function.  Our scheme is based on approach suggested in work
\cite{BEG}, but there are serious modification due to a much more general formulation of the problem.

Throughout the proof we  fix a small neighbourhood $\Xi$ of the point $\l_0$ such that $\Xi\subset\Xi_k$ for all $k\in\{0,n\}$ and $\Xi$ is a subset of the neighbourhood mentioned in Theorem~\ref{th2.1}. In what follows we assume that $\l\in\Xi$ and if needed, this neighbourhood will be lessen without saying this explicitly.

\subsection{Structure of non-trivial solutions}

In this subsection we show that all non-trivial solutions of problem (\ref{2.14}) associated with resonances $\l\in\Xi$ satisfy certain representation, which will serve as a base for the proof of Theorem~\ref{th2.2}.

We choose arbitrary $g_k\in\mW$, $k=1,\ldots,n$, assuming that $g_1\equiv 0$ on $\Om_-$ and $g_n\equiv 0$ on $\Om_+$. By $u_k=u_k(x,\l)$ we denote the solutions of problems (\ref{3.26}), (\ref{3.27}), (\ref{5.7d}), (\ref{5.7f}) with $g=g_k$. Let  $\mathcal{S}(\cdot)$ be a shift operator in $L_2(\Pi)$ acting by the rule $\mathcal{S}\big(\cdot\big)\big(u(x)\big):=u(x_1-\cdot,x')$.

The main aim of the present subsection is to prove the following lemma.

\begin{lemma}\label{lm6.1} For $\ell$ large enough
each non-trivial solution of problem (\ref{2.14}) associated with some $\l\in\Xi$ can be represented as

\begin{align}\label{6.5}
\psi(x,\ell)=
\left\{
\begin{aligned}
& u_1(x_1-X_\ell^{(1)},x') &&\text{as}\quad x_1<X_\ell^{(1)},
\\
&
 \begin{aligned}
 &u_{k-1}(x_1-X_\ell^{(k-1)},x')
 \\
 &\hphantom{u}+ u_k(x_1-X_\ell^{(k)},x')
 \end{aligned}&&\text{as}\quad X_\ell^{(k-1)}<x_1<X_\ell^{(k)},\quad k=2,\ldots,n,
\\
&u_n(x_1-X_\ell^{(n)},x') &&\text{as}\quad x_1>X_\ell^{(n)}.
\end{aligned}
\right.
\end{align}
for some $g_k\in\mW$.
\end{lemma}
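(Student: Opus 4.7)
My plan is to extract the $g_k$'s from $\psi$ by a ``localize-and-correct'' procedure.

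First, I translate (\ref{6.5}) into matching conditions across the interfaces $x_1=X_\ell^{(k)}$. Requiring the right-hand side of (\ref{6.5}) to lie in $\Holoc^2(\Pi)$ yields, for $k=1,\ldots,n$, the trace identities
\[
 [u_k]_0=u_{k-1}(\Delta_k,\cdot)-u_{k+1}(-\Delta_{k+1},\cdot),
\]
with the convention $u_0\equiv u_{n+1}\equiv 0$ and $\Delta_k:=X_\ell^{(k)}-X_\ell^{(k-1)}$, together with an analogous identity for the conormal derivatives obtained from the jumps of $\p/\p\nu^{(k)}$. The interior equation $(\hat\Op_\ell-\l)\psi=0$ on each slab $(X_\ell^{(k-1)},X_\ell^{(k)})$ is then automatic, because the compatibility between the periodic parts of $A_\natural^{(k-1)}$ and $A_\natural^{(k)}$ on the overlap region forces the two shifted single-perturbation operators $\hat\Op^{(k-1)}$ and $\hat\Op^{(k)}$ to coincide there and to agree with $\Op_\ell$; the radiation conditions at $\pm\infty$ imposed in (\ref{2.14}) are delegated to $u_1$ and $u_n$ through (\ref{5.7d}), (\ref{5.7f}).

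Second, I construct a candidate $g_k^{(0)}$. Fix a smooth cut-off $\chi=\chi(x_1)$ equal to $1$ on $|x_1|<a/3$ and supported in $|x_1|<2a/3$, and set
\[
 g_k^{(0)}:=\chi(\cdot)\,\psi(\cdot+X_\ell^{(k)},\cdot)\big|_{\Om_-}\oplus\chi(\cdot)\,\psi(\cdot+X_\ell^{(k)},\cdot)\big|_{\Om_+}.
\]
Since $\psi\in\Holoc^2(\Pi)$, one has $[g_k^{(0)}]_0=0$ and $[\p g_k^{(0)}/\p\nu^{(k)}]_0=0$, while $F_k^{(0)}=(\hat\Op^{(k)}-\l)g_k^{(0)}|_{\Om_\pm}$ is supported where $\nabla\chi$ lives. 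Lemmas~\ref{lm5.3b} and \ref{lm5.4} produce the corresponding $u_k^{(0)}$, and the function $\tilde\psi^{(0)}$ built from them through (\ref{6.5}) solves the correct PDE in each slab and carries outgoing asymptotics at $\pm\infty$; however, it misses the trace matching identities by an amount of order $\eta(\ell)$ originating from the exponentially small tails $u_{k-1}^{(0)}(\Delta_k,\cdot)$ and $u_{k+1}^{(0)}(-\Delta_{k+1},\cdot)$ controlled by (\ref{3.3}) and (\ref{3.29}).

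Finally, I absorb the mismatch by an additive correction $\delta g_k\in\mW$ whose jumps at $\om_0$ are prescribed to cancel the defects. This sets up a linear system $(\mathrm{I}+\sK_\ell)(\delta g_k)_k=(\mathrm{defects})$ on $\bigoplus_{k=1}^n\mW$, in which $\sK_\ell$ records the exchange of tails between neighbouring auxiliary problems and has operator norm $O(\eta(\ell))$ by Lemmas~\ref{lm5.3b} and \ref{lm5.4}; for $\ell$ large the operator is invertible by Neumann series, yielding $g_k:=g_k^{(0)}+\delta g_k$ for which (\ref{6.5}) produces a $\tilde\psi$ whose difference $\tilde\psi-\psi$ solves the homogeneous version of (\ref{2.14}) with vanishing outgoing amplitudes and hence vanishes by unique solvability of the corresponding scattering problem. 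The main obstacle is that the matching conditions at the $n-1$ interior interfaces are coupled and must be treated simultaneously, and that the correction has to be chosen inside the weighted classes $W_{2,\tilde\mr,a}^{2,\pm}$ of Lemmas~\ref{lm5.3b} and \ref{lm5.4} so as not to spoil the outgoing-wave structure demanded by Theorem~\ref{th2.1}.
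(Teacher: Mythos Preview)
Your correction step has a genuine gap. The operators $\cU^{(k)}(\l)$ produced by Lemmata~\ref{lm5.4} and~\ref{lm5.3b} carry the pole $\sum_p\frac{\phi_p^{(k)}}{\l_0-\l}\cP_p^{(k)}$ of representation~(\ref{6.4a}); estimate~(\ref{3.29}) is stated only for the reduced resolvents $\cR^{(1)},\cR^{(n)}$, and the proof of~(\ref{3.3}) actually bounds only $\cT_\pm\cS(\pm X)\cR^{(k)}$. For a generic correction $\delta g_k$ the functional $\cP_p^{(k)}(\l)\delta g_k$ does not vanish, so the tail of $\cU^{(k)}(\l)\delta g_k$ at distance $X\sim\la\ell\ra$ contains the term $\frac{1}{\l_0-\l}\,\cT_\pm\cS(\pm X)\phi_p^{(k)}\cdot\cP_p^{(k)}(\l)\delta g_k$, of size $O\big(\eta(\ell)\,\|\delta g_k\|/|\l-\l_0|\big)$. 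This is not bounded uniformly in $\l\in\Xi$ and blows up as $\l\to\l_0$; your claim $\|\sK_\ell\|=O(\eta(\ell))$ therefore fails precisely near $\l_0$, which is where all the resonances eventually live. (Your seed $g_k^{(0)}=\chi\,\psi(\cdot+X_\ell^{(k)},\cdot)$ happens to kill the pole, since an integration by parts gives $\cP_p^{(k)}(\l)g_k^{(0)}=(\l_0-\l)\,(\chi\psi(\cdot+X_\ell^{(k)}),\phi_p^{(k)})_{L_2(\Pi_a)}$, but this cancellation is lost once $\delta g_k$ has nontrivial jumps at $\om_0$.)

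The paper sidesteps the pole entirely. Rather than the perturbed solution operators $\cU^{(k)}$, it uses the \emph{purely periodic} operators $\cU^{(k)}_{per}$ of problems~(\ref{4.24}), which are holomorphic throughout $\Xi$ because $\l_0\notin\spec(\Op_{per}^{(k)})$ for $k=1,\ldots,n-1$ by hypothesis. On each periodic slab the splitting of $\psi$ into a right-decaying piece $v_+$ and a left-decaying piece $v_-$ is obtained from a genuinely contracting fixed-point equation~(\ref{4.28}) with $\|\cL(\l,X)\|\leqslant Ce^{-2cX}$ uniformly in $\l\in\Xi$; only afterwards are the resulting $u_k$ recognised as solutions of the perturbed auxiliary problems with specific data $g_k$. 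A secondary issue: your closing uniqueness appeal is also unjustified, since you give no reason why the outgoing amplitudes of $\tilde\psi$ should equal those of $\psi$, and even if they did, at a real resonance the homogeneous $L_2$ problem can admit a nontrivial solution.
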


The rest of the subsection is devoted to the proof of this lemma. Let $\psi$ be a non-trivial solution of problem (\ref{2.14}) associated with some $\l\in\Xi$ and assume that $\ell$ is large enough. In view of formula (\ref{6.5}), the restriction of the function  $u_1$ on $\Pi_0^-$ and that of $u_n$ on $\Pi_0^+$ are recovered immediately:
\begin{equation}\label{4.23}
u_1(x):=\psi(x_1-X_\ell^{(1)},x'),\quad x_1<0,\qquad u_n(x):=\psi(x_1-X_\ell^{(n)},x'),\quad x_1>0.
\end{equation}
We are going to show how to recover $u_1(x)$ as $x_1>0$ and $u_2(x)$ as $x_2<0$; the procedure for other functions is similar.

We choose some $f_\pm\in\H^2(\Om_\pm)$ and consider two boundary value problems in $\H^2(\Pi_0^-)\oplus\H^2(\Pi_0^+)$:
\begin{equation}\label{4.24}
\begin{gathered}
\big(\hat{\Op}^{(1)}_{per}-\l\big)v_\pm=0 \quad\text{in}\quad\Pi\setminus\om_0,
\qquad v_\pm=0\quad\text{on}\quad\p\Pi,
\\
[v_\pm]_0=\pm f_\pm\big|_{x_1=\pm 0}, \qquad \left[\frac{\p v_\pm}{\p\nu^{(1)}}\right]_0=\pm \frac{\p f_\pm}{\p\nu^{(1)}}\bigg|_{x_1=\pm 0}.
\end{gathered}
\end{equation}

Since $\l\notin\spec(\Op_{per}^{(k)})$, $k=1,2$, a statement similar to Lemma~\ref{lm5.4} holds for problem (\ref{4.24}). Namely, these problems are uniquely solvable. The operators $\cU^{(k)}_{per}(\l)$, $k=1,2$, mapping $f_\pm$ into the solutions are holomorphic in $\l\in\Xi$ as bounded operators from $\H^2(\Om_\pm)$ into $\H^2(\Pi_0^-)\oplus\H^2(\Pi_0^+)$. Moreover, for $X$ large enough the estimates hold:
\begin{equation}\label{4.25}
\|\cT_-\cS(-X)\cU^{(1)}_{per}(\l)\|_{\mW\to\H^2(\Om_-)}\leqslant Ce^{-c X},
\qquad \|\cT_+\cS(X)\cU^{(2)}_{per}(\l)\|_{\mW\to\H^2(\Om_+)}\leqslant Ce^{-c X},
\end{equation}
where $C$ and $c$ are some fixed positive constants independent of $\l\in\Xi$.

We first recover the function $u_1(x)$ as $x_1>a$ and  $u_2(x)$ as $x_1<-a$. In other words, we recover the functions $u_1(x_1-X_\ell^{(1)},x')$ and $u_2(x_1-X_\ell^{(2)},x')$ as $X_\ell^{(1)}+a<x_1<X_\ell^{(2)}-a$. In this zone, the coefficients of the operator $\Op_\ell$ are pure periodic and $\hat{\Op}_\ell$ coincides with $\hat{\Op}_{per}^{(1)}$, the needed functions $u_1$ and $u_2$ should solve the equation $(\hat{\Op}_{per}^{(1)}-\l)u_k=0$, $k=1,2$, should vanish on the $\p\Pi$ and they should also satisfy the boundary conditions
\begin{align*}
&(u_1(x_1-X_\ell^{(1)},x')+u_2(x_1-X_\ell^{(2)},x'))\big|_{x_1=X_\ell^{(1)}+a+0}= \psi(x)\big|_{x_1=X_\ell^{(1)}+a+0},
\\
&(u_1(x_1-X_\ell^{(1)},x')+u_2(x_1-X_\ell^{(2)},x'))\big|_{x_1=X_\ell^{(2)}-a-0}= \psi(x)\big|_{x_1=X_\ell^{(2)}-a-0},
\end{align*}
and the same condition for the first derivatives $\frac{\p\ }{\p\nu^{(1)}}$. The function $u_1$ should decay  as $x_1$ grows, while $u_2$ should decay as $x_1$ goes in the opposite direction.

The main idea how to find $u_1$ and $u_2$ is a follows. We first treat $u_2$ as an unknown function, and then $u_1=\psi-u_2$. We write the discussed equation and boundary conditions for $u_1$ at $x_1=X_\ell^{(1)}+a+0$ as   appropriate boundary value problem (\ref{4.24}) for $v_+=u_1$. The values of the function $\psi(x_1-X_\ell{(1)}-a)-v_+(x)$ at $x_1=X_\ell^{(2)}-X_\ell^{(1)}-a$ serve as the boundary conditions for the function $u_2=v_-$ in corresponding boundary value problem (\ref{4.24}). Once we know the function $u_2$, we
consider its values at $x_1=-(X_\ell^{(2)}-X_\ell^{(1)}-2a)$ and we recall that these values were initially considered as an unknown function. So, we arrive at certain functional equation for $u_2$, which turns out to be uniquely solvable. This determines uniquely both $u_1$ and $u_2$. We proceed to a rigorous realization of this idea.

We choose an arbitrary $f\in\H^2(\Om_+)$ and let
\begin{equation}\label{4.26}
v_+:=\cU^{(1)}_{per}(\l)\cS(-X_\ell^{(1)}-a)\psi-\cU^{(1)}_{per}(\l)f.
\end{equation}
In terms of this function, we define one more function $v_-$ as
\begin{equation}\label{4.27}
\begin{aligned}
v_-:=&\cU^{(2)}_{per}(\l)\cT_-\cS(-X_\ell^{(2)})\psi
-\cU^{(2)}_{per}(\l)\cT_- \cS(-X_\ell^{(2)}-X_\ell+2a) v_1
\\
=&\cU^{(2)}_{per}(\l)\cT_-\cS(-X_\ell^{(2)})\psi
-\cU^{(2)}_{per}(\l)\cT_- \cS(-X_\ell^{(2)}-X_\ell+2a) \cU^{(1)}_{per}(\l)\cS(-X_\ell^{(1)}-a)\psi
\\
&+\cU^{(2)}_{per}(\l)\cT_- \cS(-X_\ell^{(2)}-X_\ell+2a)\cU^{(1)}_{per}(\l)f.
\end{aligned}
\end{equation}
Then we postulate that
\begin{equation}\label{4.33}
f=\cT_+\cS(X_\ell^{(2)}-X_\ell^{(1)}-2a) v_-
\end{equation}
and in view of (\ref{4.27}) this gives rise to an equation:
\begin{equation}\label{4.28}
f-\cL(\l,X)f=h,\quad X:=X_{\ell}^{(2)}-X_\ell^{(1)}-2a,
\end{equation}
where
\begin{align*}
&
h:=\cT_+\cS(X)\cU^{(2)}_{per}(\l)\cT_-\cS(-X)\psi
-\cT_+\cS(X)\cU^{(2)}_{per}(\l)\cT_- \cS(-X) \cU^{(1)}_{per}(\l)\cS(-X_\ell^{(1)}-a)\psi,
\\
&\cL(\l,X):=\cT_+\cS(X)\cU^{(2)}_{per}(\l)\cT_- \cS(-X)\cU^{(1)}_{per}(\l).
\end{align*}
Thanks to estimates (\ref{4.25}), we see immediately that the operator $\cL(\l,X)$ is bounded in the space $\H^2(\Om_+)$ and its norm is exponentially small in $X$:
\begin{equation*}
\|\cL(\l,X)\|_{\H^2(\Om_+)\to\H^2(\Om_+)}\leqslant Ce^{-2cX}.
\end{equation*}
Hence, equation (\ref{4.28}) is uniquely solvable and this determines the function $f$ and the functions $v_\pm$ via formulae (\ref{4.26}), (\ref{4.27}). In view of the latter formulae, it is straightforward to check that the function
\begin{equation*}
v(x):=\left\{
\begin{aligned}
& v_+(x) &&\text{as}\quad x_1<0,
\\
& v_+(x)+v_-(x_1-X,x') &&\text{as}\quad 0<x_1<X,
\\
&v_-(x_1-X,x') &&\text{as}\quad x_1>X,
\end{aligned}
\right.
\end{equation*}
solves the boundary value problem
\begin{equation}\label{4.29}
\begin{aligned}
&\big(\hat{\Op}^{(1)}_{per}-\l\big)v=0 \quad\text{in}\quad\Pi\setminus(\om_0\cup\om_X),\quad
&& \hphantom{v}v=0\quad\text{on}\quad\p\Pi,
\\
&[v]_0=S(-X_\ell^{(1)}-a)\psi\big|_{x_1=+0}, && \left[\frac{\p v}{\p\nu^{(1)}}\right]_0=\frac{\p\ }{\p\nu^{(1)}}
S(-X_\ell^{(1)}-a)\psi\big|_{x_1=+0},
\\
&[v]_X=-S(-X_\ell^{(1)}-a)\psi\big|_{x_1=X-0},
&& \left[\frac{\p v}{\p\nu^{(1)}}\right]_X=-
\frac{\p\ }{\p\nu^{(1)}}S(-X_\ell^{(1)}-a)\psi\big|_{x_1=X-0},
\end{aligned}
\end{equation}
where
\begin{equation*}
\om_X:=\{X\}\times\om,\qquad [v]_X:=v\big|_{x_1=X+0}-v\big|_{x_1=X-0}.
\end{equation*}
This problem is uniquely solvable, which can be proved  as the same was done for problems (\ref{3.26}), (\ref{3.27}) in Lemma~\ref{lm5.4}. It is easy to see that the function
\begin{equation*}
x\mapsto \left\{
\begin{aligned}
&0 && \text{as}\quad x_1<0,
\\
\psi(x_1+&X_\ell^{(1)}+a,x')\quad   && \text{as}\quad 0<x_1<X,
\\
& 0 && \text{as}\quad x_1>X,
\end{aligned}
\right.
\end{equation*}
solves problem (\ref{4.29}). Hence, $\psi(x_1+X_\ell^{(1)}+a,x')=v(x)$ and therefore,
\begin{align}
&v_+(x)+v_-(x_1-X,x')=\psi(x_1+X_\ell^{(1)}+a,x') \quad \text{as}\quad x_1\in(0,X),\label{4.34}
\\
& v_+(x)=0 \quad \text{as}\quad x_1<0,\qquad v_-(x)=0 \quad \text{as}\quad x_1>0.\label{4.32}
\end{align}
Then we let
\begin{equation}\label{4.30}
u_1(x):=v_+(x_1-a,x')\quad\text{as}\quad x_1>a,\qquad u_2(x):=v_-(x_1+a,x')\quad\text{as}\quad x_1<-a.
\end{equation}
It remains to find the function  $u_1$ for $x_1\in(0,a)$ and the function $u_2$ for $x_1\in(-a,0)$. We define them as
\begin{equation}\label{4.31}
\begin{aligned}
&u_1(x):=\psi(x_1-X_\ell^{(1)},x')-u_2(x_1-X_\ell^{(2)}+X_\ell^{(1)},x')\quad\text{as}\quad x_1\in(0,a),
\\
&u_2(x):=\psi(x_1-X_\ell^{(2)},x')-u_1(x_1+X_\ell^{(2)}-X_\ell^{(1)},x')\quad\text{as}\quad x_1\in(-a,0),
\end{aligned}
\end{equation}
We observe that in the above definition of the function $u_1$, respectively, $u_2$, we employ the values of the function $u_2$, respectively, $u_1$, already defined in (\ref{4.30}).

It follows from (\ref{4.32}), (\ref{4.33}), (\ref{4.26}), (\ref{4.27}) that the function  $u_1$  defined in (\ref{4.30}), (\ref{4.31}) satisfies the identities:
\begin{equation*}
u_1(a+0,x')-u_1(a-0,x')=v_+(+0,x')-\psi\big(a-X_\ell^{(1)},x'\big)+ u_2\big(a-X_\ell^{(2)}+X_\ell^{(1)},x'\big)
=v_+(-0,x')=0.
\end{equation*}
In the same way we confirm that
\begin{gather*}
\frac{\p u_1}{\p\nu^{(1)}}(a+0,x')=\frac{\p u_1}{\p\nu^{(1)}}(a-0,x'),
\\
u_2(-a+0,x')=u_2(-a-0,x'),\qquad \frac{\p u_2}{\p\nu^{(1)}}(-a+0,x')=\frac{\p u_2}{\p\nu^{(1)}}(-a-0,x').
\end{gather*}
Hence, the found function  $u_1$ belong to $\H^2(\Pi_0^-)\oplus\H^2(\Pi_0^+)$, while the function $u_2$ belongs to $\H^2(\Pi_0^-)$. It also follows from (\ref{4.34}) and (\ref{4.23}) that
\begin{equation*}
u_1\big(x_1-X_\ell^{(1)},x'\big)+u_2\big(x_1-X_\ell^{(2)},x'\big)=\psi(x)\quad\text{as}\quad x_1<X_\ell^{(2)}.
\end{equation*}
The function $u_1$ also solves problem  (\ref{3.26}), (\ref{3.27}), (\ref{5.7d}), (\ref{5.7f}) with $g_1=(0\oplus\cT_+ \cS(X_\ell^{(2)}-X_\ell^{(1)})u_2)$. It also determines partially $g_2$, namely, $\cT_-g_2=\cT_-\cS(X_\ell^{(1)}-X_\ell^{(2)})u_1$. Other functions $u_j$ and $g_j$ can be recovered by repeating the above described procedure for $x_1\in(X_\ell^{(k)},X_\ell^{(k+1)})$, $k=2,\ldots,n-1$. This completes the proof of Lemma~\ref{lm6.1}.

\subsection{Reduction to   operator equation}\label{ss:reduc1}
Here we reduce problem  (\ref{2.14}) to some operator equation, which is more convenient for further purposes. We introduce a Hilbert space
\begin{align*}
\mG:=\Bigg\{\mathbf{g}=
\begin{pmatrix}
g_1
\\
\ldots
\\
g_n
\end{pmatrix},\  g_k\in\mW,\  k=1,\ldots,n,\ g_1\big|_{\Om_-}=0,\ g_n\big|_{\Om_+}=0\Bigg\}, \qquad
(\mathbf{g},\mathbf{h})_{\mG}=\sum\limits_{k=1}^{n}(g_k,h_k)_{\mW}.
\end{align*}
Then we choose an element $\mathbf{g}\in \mG$ and construct a function $\psi$ by formula (\ref{6.5}). We know by Lemma~\ref{lm6.1} that all nontrivial solutions of problem (\ref{2.14})
associated with resonances $\l\in\Xi$ are of form (\ref{6.5}), so, instead of finding $\psi$, we are going to find a corresponding $\mathbf{g}\in\mG$.

The function $\psi$ introduced by (\ref{6.5}) satisfies boundary conditions in   (\ref{2.14}) and possesses a needed behavior at infinity. We only need to confirm that it belongs to $\Hloc^2(\Pi)$ and solves the equation in  (\ref{2.14}). The former condition is ensured by the continuity in the trace sense of $\psi$ and $\frac{\p\psi\quad}{\p\nu^{(k)}}$ at $\{X_\ell^{(k)}\}\times\om$, $k=1,\ldots,n$. The equation is to be checked only as $|x_1-X_\ell^{(k)}|<a$ since outside these zones the equation is obviously satisfied. In view of the definition of the functions $u_k$ and formulae (\ref{4.22}) it is easy to see that both the belonging to $\Hloc^2(\Pi)$ and the validity of the equation hold once
\begin{align*}
&g_1=-\big(0\oplus \cT_+ u_2(\cdot+X_\ell^{(1)}-X_\ell^{(2)})\big),
\\
&g_k=-\big(\cT_- u_{k-1}(\cdot + X_\ell^{(k)}-X_\ell^{(k-1)})\oplus \cT_- u_{k+1}(\cdot + X_\ell^{(k)}-X_\ell^{(k+1)})\big),\quad k=2,\ldots,n-1,
\\
&g_n=-\big(\cT_- u_{n-1}(\cdot + X_\ell^{(n)}-X_\ell^{(n-1)})\oplus 0\big).
\end{align*}
 These identities can be rewritten as a system of operator equations
\begin{align}\label{6.2}
\begin{aligned}
&
g_1+\cT_{12}g_2=0,
\\
&
g_k+\cT_{kk-1}g_{k-1}+\cT_{kk+1}g_{k+1}=0,\quad k=2,\ldots,n-1,
\\
&
g_n+\cT_{nn-1}g_{n-1}=0,
\end{aligned}
\end{align}
where $\cT_{kj}$
are  operators in the space $\mW$  defined as
\begin{equation*}
\cT_{k\,k+1}(\l,\ell)g_{k+1}=\cT_+\mathcal{S}(X_{\ell}^{(k+1)}-X_{\ell}^{(k)})u_{k+1},\qquad
\cT_{k\,k-1}(\l,\ell)g_{k-1}=\cT_-\mathcal{S}(X_{\ell}^{(k)}-X_{\ell}^{(k-1)})u_{k-1},
\end{equation*}

In the space $\mG$ we introduce an  operator:
\begin{equation*}
\cT(\lambda, \ell):=
\begin{pmatrix}
0 & \cT_{12} & 0 & 0 & \ldots & 0 & 0 & 0 & 0
\\
\cT_{21} & 0 & \cT_{23} & 0 & \ldots & 0 & 0 & 0 & 0
\\
0 & \cT_{32} & 0 & \cT_{34} & \ldots & 0 & 0 & 0 & 0
\\
\vdots & \vdots & \vdots & \vdots& \ddots & \vdots & \vdots & \vdots & \vdots
\\
0 & 0 & 0 & 0 & \ldots & \cT_{n-2n-3} & 0  & \cT_{n-2n-1} & 0
\\
0 & 0 & 0 & 0 & \ldots & 0 & \cT_{n-1n-2} & 0 & \cT_{n-1n}
\\
0 & 0 & 0 & 0 & \ldots & 0 & 0 & \cT_{nn-1} & 0
\end{pmatrix}.
\end{equation*}
In terms of the above notation, system (\ref{6.2}) casts into the form:
\begin{equation}\label{6.3}
\mathbf{g}+\cT(\lambda,\ell)\mathbf{g}=0.
\end{equation}
The sought resonances of the operator $\Op_\ell$ are the value of   $\l\in\Xi$, for which the latter equation possesses non-trivial solutions.
The further study of this equation is based on the approach suggested  in \cite{AHP07}, \cite{MPAG07}, see also  \cite{DIB}. This approach will allow us to reduce the above problem on nontrivial solutions of   (\ref{6.3}) to searching zeroes of some holomorphic function.

\subsection{Reduction to  algebraic equations}\label{ss:reduc2}

We recall that $\l_0$ is an eigenvalue of some of the operators $\Op^{(k)}$, $k\in\{0,\ldots,n\}$, of multiplicities $N^{(k)}$ with associated orthonormalized in $L_2(\Pi)$ eigenfunctions $\phi_p^{(k)}$, $p=1,\ldots,N^{(k)}$; the identity $N^{(k)}=0$ corresponds to the case, when
 $\l_0$ is not an eigenvalue of the operator $\Op^{(k)}$. We denote
\begin{equation*}
\mathbf{\Phi}^{(k)}_{p,+}(\ell):=
\begin{pmatrix}
0
\\
\vdots
\\
0
\\
\cT_- \mathcal{S}\left(X_{\ell}^{(k)}-X_{\ell}^{(k+1)}\right)\phi^{(k)}_p
\\
0
\\
\vdots
\\
0
\end{pmatrix}\in \mG,\qquad p=1,\ldots,N^{(k)},
\end{equation*}
where the non-zero element stands as  $(k+1)$th position and
\begin{equation*}
\mathbf{\Phi}^{(k)}_{p,-}(\ell):=
\begin{pmatrix}
0
\\
\vdots
\\
0
\\
\cT_+ \mathcal{S}\left(X_{\ell}^{(k)}-X_{\ell}^{(k-1)}\right)\phi^{(k)}_p
\\
0
\\
\vdots
\\
0
\end{pmatrix}\in \mG,\qquad p=1,\ldots,N^{(k)},
\end{equation*}
where the non-zero element stands as  $(k-1)$th position. If $N^{(k)}=0$, the above vectors are supposed to be zero. We also denote
\begin{equation*}
\mathbf{\Phi}^{(1)}_p:=\mathbf{\Phi}_{p,+}^{(1)},\qquad \mathbf{\Phi}^{(n)}_p:=\mathbf{\Phi}_{p,-}^{(n)},\qquad \mathbf{\Phi}^{(k)}_p:=\mathbf{\Phi}_{p,-}^{(k)} + \mathbf{\Phi}_{p,+}^{(k)}.
\end{equation*}

By (\ref{2.25}), (\ref{2.22}), (\ref{2.26}), (\ref{2.27}), the estimate holds:
\begin{equation}\label{5.26}
\|\mathbf{\Phi}^{(k)}_p\|_{\mG} \leqslant C  \|\ell\|^\vk  e^{-\mr \la\ell\ra},
\end{equation}
where  $C>0$ are some constants independent of $\ell$.

In  view of Lemmata~\ref{lm5.4},~\ref{lm5.3b} and the definition of the functions  $u_k$, for $\l$ close to $\l_0$, the operator  $\cT(\l,\ell)$ admits the representation:
\begin{equation}\label{5.19}
\begin{aligned}
\cT(\l,l)\mathbf{g}=&\frac{1}{\l_0-\l}   \sum\limits_{k=1}^{n} \sum\limits_{p=1}^{N^{(k)}} \cC_p^{(k)}(\l)\mathbf{g}\, \mathbf{\Phi}_{p}^{(k)}(\ell)
+\cR(\l,\ell)\mathbf{g}.
\end{aligned}
\end{equation}
Here $\cC_p^{(k)}=\cC_p^{(k)}(\l)$ are the functionals on $\mG$ defined as
\begin{equation}\label{3.11}
\cC_p^{(k)}(\l)\mathbf{g}:=\cP_p^{(k)}(\l) g_k,
\end{equation}
and the writing  $\cC_{p}^{(k)}(\l)\mathbf{g}\,\mathbf{\Phi}_{p}^{(k)}(\l,\ell)$ denotes a usual scalar multiplication of the number $\cC_p^{(k)}(\l)\mathbf{g}$ and the vector $\mathbf{\Phi}_p^{(k)}(\l,\ell)$. The symbol $\cR=\cR(\l,\ell)$ stands for the following operator in $\mG$:
\begin{equation*}
\cR:=
\begin{pmatrix}
0 & \cR_{12} & 0 & 0 & \ldots & 0 & 0 & 0 & 0
\\
\cR_{21} & 0 & \cR_{23} & 0 & \ldots & 0 & 0 & 0 & 0
\\
0 & \cR_{32} & 0 & \cR_{34} & \ldots & 0 & 0 & 0 & 0
\\
\vdots & \vdots & \vdots & \vdots& \ddots & \vdots & \vdots & \vdots & \vdots
\\
0 & 0 & 0 & 0 & \ldots & \cR_{n-2n-3} & 0  & \cR_{n-2n-1} & 0
\\
0 & 0 & 0 & 0 & \ldots & 0 & \cR_{n-1n-2} & 0 & \cR_{n-1n}
\\
0 & 0 & 0 & 0 & \ldots & 0 & 0 & \cR_{nn-1} & 0
\end{pmatrix},
\end{equation*}
where $\cR_{kj}=\cR_{kj}(\l,\ell)$ are the operators in  $\mW$ defined by the formulae
\begin{equation*}
\cR_{k\,k+1}:=\cT_+\mathcal{S}(X_{\ell}^{(k+1)}-X_{\ell}^{(k)})\cR^{(k+1)}(\l), \qquad
\cR_{k\,k-1}:=\cT_+\mathcal{S}(X_{\ell}^{(k-1)}-X_{\ell}^{(k)})\cR^{(k-1)}(\l),
\end{equation*}
with the operators $\cR^{(k\pm 1)}$  from representations (\ref{6.4a}).  
In view of estimates (\ref{3.3}), (\ref{3.29}),  the operators   $\cR_{kj}$ acting in  $L_2(\Pi_{a+1})$ possess an exponentially small norm and the same is true for the operator $\cR$ in the space $\mG$. Namely, there exist   $C>0$ and $c>0$ independent of $\l\in\Xi$  such that for all $\l\in\Xi$ the estimates hold
\begin{equation}\label{5.20}
\begin{aligned}
& \|\cR_{k-1\,k}(\l,\ell)\| \leqslant C |X_l^{(k)}-X_l^{(k-1)}|^\vk
e^{-(\mr-c|\IM\l|)\|\ell\|},
\\
&  \|\cR_{k+1\,k}(\l,\ell)\| \leqslant C |X_l^{(k+1)}-X_l^{(k)}|^\vk
e^{-(\mr-c|\IM\l|)\|\ell\|},
\\
&
\|\cR(\l,\ell)\|\leqslant C \|\ell\|^\vk e^{-(\mr-c|\IM\l|)\|\ell\|}
\end{aligned}
\end{equation}
for sufficiently large $\la\ell\ra$.
Moreover, the operator $\cR(\l,\ell)$ is holomorphic in  $\l\in\Xi$.
 In particular, this means that there exists an inverse operator
\begin{equation}\label{5.27}
\cQ=\cQ(\l,\ell):=\big(\cI+\cR(\l,\ell)\big)^{-1}
\end{equation}
holomorphic in $\l$, where $\cI$ is the identity mapping.

We substitute representation (\ref{5.19}) into equation (\ref{6.3}) and apply then the operator  $\cQ$:
\begin{equation}\label{5.21}
\mathbf{g} +
  \frac{1}{\l_0-\l}  \sum\limits_{k=1}^{n} \sum\limits_{p=1}^{N^{(k)}} \cC_p^{(k)}(\l)\mathbf{g}\, \cQ(\l,\ell)
\mathbf{\Phi}_p^{(k)}(\ell)=0.
\end{equation}
We apply the functionals  $\cC^{(r)}_j(\l)$ to the obtained equation:
\begin{equation}\label{5.22}
\cC^{(r)}_j(\l)\mathbf{g} +
  \frac{1}{\l_0-\l}   \sum\limits_{k=1}^{n} \sum\limits_{p=1}^{N^{(k)}} \cC_p^{(k)}(\l)\mathbf{g}\, \cC^{(r)}_j(\l)\cQ(\l,\ell)
\mathbf{\Phi}_{p}^{(k)}(\ell)=0.
\end{equation}
These equations are a system of linear algebraic equations with respect to unknown quantities
$\cC^{(r)}_j(\l)\mathbf{g}$. We observe first that only nontrivial solutions of system
(\ref{5.22}) can generate nontrivial solution of problem  (\ref{2.14}). Indeed, by equation
 (\ref{5.21}), the trivial solution $\cC_p^{(k)}(\l)\mathbf{g}=0$ gives $\mathbf{g}=0$. In its turn, this means that the corresponding functions $u_k$ are also trivial and the same is true for the function  $\psi$ defined by formula (\ref{6.5}).

To study the existence of nontrivial solutions to system (\ref{5.22}), we rewrite it first to a matrix form. We introduce the vector of unknowns in a block form:
\begin{align*}
&\sC:=
\begin{pmatrix}
\sC_1
\\
\vdots
\\
\sC_n
\end{pmatrix},
\qquad
\sC_k=\sC_k(\l):=
\begin{pmatrix}
\cC_p^{(k)}(\l)\mathbf{g}
\end{pmatrix}_{p=1,\ldots,N^{(k)}}
\end{align*}
Hereinafter the vectors of the form $(\cdot)_{p=1,\ldots,N^{(k)}}$ are treated as the vector  columns. If   $N^{(k)}=0$, the corresponding column in the above formulae is absent. The total size of the introduced column is equal to $N$ defined by (\ref{2.30}).

We introduce a matrix $\sA=\sA(\l,\ell)$ of size $N\times N$. This matrix has a block form:
\begin{equation}\label{5.30}
\sA:=
\begin{pmatrix}
\sA_{11} & \ldots & \sA_{1n}
\\
\vdots && \vdots
\\
\sA_{n1} & \ldots & \sA_{nn}
\end{pmatrix}.
\end{equation}
Each  block $\sA_{rk}=\sA_{rk}(\l,\ell)$ is of the size $N^{(r)}\times N^{(k)}$. The blocks are defined as
\begin{equation}
\label{5.31}
\sA_{rk}(\l,\ell):=
\begin{pmatrix}
\cC_j^{(r)}(\l)\cQ(\l,\ell)  \mathbf{\Phi}_{p}^{(k)}
\end{pmatrix}^{j=1,\ldots,N^{(r)}}_{p=1,\ldots,N^{(k)}}.
\end{equation}

In view of the introduced notations, equations (\ref{5.22}) are rewritten to the matrix one:
\begin{equation}\label{5.24}
\bigg(\sE+\frac{1}{\l_0-\l}\sA(\l,\ell)\bigg)\sC=0.
\end{equation}
By the Cramer's rule, the existence of nontrivial solution  is equivalent to the equation
\begin{equation}\label{5.25}
\det\bigg(\big(\l-\l_0\big) \sE -\sA(\l,\ell)\bigg)=0,
\end{equation}
where $\sE$ is the unit matrix. This identity is the equation for the sought resonances of the operator  $\Op_\ell$. For each root $\l=\l(\ell)$  of this equation, the corresponding nontrivial solution of system  (\ref{5.24}) generates the solution  $\mathbf{g}$ of equation (\ref{5.21}). In its turn, by formula  (\ref{6.5}), this solution generates a solution to problem (\ref{2.14}). Thus, we need to study the existence and behaviour of roots of equation  (\ref{5.25}). This will be done in the next section.

\section{Solvability of equation (\ref{5.25}) and behavior of its roots}\label{s:proof}

In this section we complete the proof of Theorem~\ref{th2.2}.
Our strategy is as follows. First we establish a preliminary rough estimate  for the matrix $\sA$, which allows us to prove the solvability of equation (\ref{5.25}) and to localize the roots, that is, to prove that all roots are contained in a circle of an exponentially small radius centered at $\l_0$.  The next step is devoted to describing the asymptotics of the matrix $\sA$ as $\ell\to\infty$. Employing this asymptotics, in  a final step we find leading terms in the asymptotics of the roots.

Throughout this section, by $C$ we denote various inessential constants independent of sufficiently large $\ell$ and $\l\in\Xi$. By  $B_r(z)$ we denote the ball in the complex plane of a radius $r$ centered at a point $z$.

\subsection{Solvability}

We begin with an  obvious property implied by Lemmata~\ref{lm5.4},~\ref{lm5.3b}, namely, a holomorphic dependence of the matrix $\sA$ in $\l\in\Xi$. This implies that the function in the left hand side in (\ref{5.25}) is holomorphic in $\l$.

The third estimate in (\ref{5.20}) and definition (\ref{5.27}) of the operator $\cQ$ yield immediately that this operator is bounded uniformly in $\l\in\Xi$ and sufficiently large $\la\ell\ra$. Then inequality (\ref{5.26}) and  definition (\ref{5.30}), (\ref{5.31}) of the matrix $\sA$ yield the following bound for the matrix $\sA$:
\begin{equation}\label{5.32}
\|\sA(\l,\ell)\|\leqslant C \eta(\ell).
\end{equation}
Here as a norm for the matrix $\sA$, we choose the maximal among the absolute values of its entries.

We calculate the determinant in the left hand side in equation (\ref{5.25})   rewriting this equation as
\begin{equation}\label{5.33}
(\l-\l_0)^N+\mathrm{F}(\l,\ell)=0,\qquad
\mathrm{F}(\l,\ell):=\sum\limits_{i=0}^{N-1}\mathrm{F}_i(\l,\ell)(\l-\l_0)^i,
\end{equation}
where $\mathrm{F}_i(\l,\ell)$ are some functions holomorphic in $\l\in\Xi$; thanks to (\ref{5.32}), these functions obey the estimates
\begin{equation}\label{5.34}
|\mathrm{F}_i(\l,\ell)|\leqslant C \eta^{(N-i)}(\ell).
\end{equation}

It follows from (\ref{5.33}), (\ref{5.34}) that each root $\l\in\Xi$ of equation (\ref{5.25}) satisfies the estimate
\begin{equation}\label{5.36}
|\l-\l_0|\leqslant C\eta^\frac{1}{N},
\end{equation}
and hence, it converges to $\l_0$ as $\ell\to\infty$. Then we consider the circle $B_{\d}(\l_0)\subset\Xi$ of a fixed radius $\d$ and by (\ref{5.34}) we see that
\begin{equation*}
|\mathrm{F}_i(\l,\ell)|<|\l-\l_0|^N=\d^n\quad\text{as}\quad \l\in\p B_\d(\l_0).
\end{equation*}
This estimate and the aforementioned holomorphy of $\mathrm{F}$ in $\l\in\Xi$ allow us to apply the Rouch\'e theorem and to conclude that the function in the left hand side in equation (\ref{5.25}) possesses exactly the same amount of the zeroes in $B_\d(\l_0)$, counting their orders,  as the function $\l\mapsto (\l-\l_0)^N$ does. Hence, equation (\ref{5.25}) has exactly $N$ roots in $\Xi$ counting their orders; by (\ref{5.36}), all these roots tend to $\l_0$ as $\ell\to\infty$.

Finally, we are going to improve estimate (\ref{5.36}). We consider the circle $B_{c\eta(\ell)}(\l_0)$, where $c:=2+2C$, where $C$ is from   (\ref{5.34}). By (\ref{5.34}), on the boundary of this circle we have the estimate:
\begin{equation*}
|\mathrm{F}(\l,\ell)|\leqslant \sum\limits_{i=0}^{N-1} |\mathrm{F}_i(\l,\ell)| c^{N-i}\eta^{N-i}(\ell)\leqslant \frac{c^{N-1}C}{1-c^{-1}}
 \eta^N(\ell)
 \leqslant  
 2C c^{N-1}\eta^N(\ell)<c^N\eta^N(\ell)=|\l-\l_0|^N.
\end{equation*}
Hence, we can apply the Rouch\'e theorem once again and we see that equation (\ref{5.25}) has exactly $N$ roots, counting their orders, in the circle $B_{c\eta(\ell)}(\l_0)$. This means that all these roots satisfy the estimate:
\begin{equation*}
|\l-\l_0|\leqslant c\eta(\ell).
\end{equation*}
This is the desired estimate for the roots of equation (\ref{5.25}).

\subsection{Asymptotics for  matrix $\sA$}

In the present subsection we find an asymptotics for the matrix   $\sA$ as $\ell\to\infty$. Since all roots of equation (\ref{5.25}) are located in the circle $B_{c\eta(\ell)}(\l_0)$, in what follows we consider only $\l\in B_{c\eta(\ell)}(\l_0)$.

Estimate  (\ref{5.20}) and definition (\ref{5.27}) of the operator $\cQ$ allow us to expand the latter operator into the standard Neumann series, which implies, in particular, the representations:
\begin{equation*}
\cQ(\l,\ell)=\cI -\cR(\l,\ell)\cQ(\l,\ell),
\qquad
\|\cR(\l,\ell)\cQ(\l,\ell)\|\leqslant C \eta(\ell),
\end{equation*}
where
the operator   $\cR(\l,\ell)\cQ(\l,\ell)$  is holomorphic in $\l\in B_{c\eta(\ell)}(\l_0)$.
We also observe that since    $\l\in B_{c\eta(\ell)}(\l_0)$, it follows from the definition of the functionals $\cC^{(k)}_j$ that
\begin{equation*}
\big\|\cC^{(k)}_j(\l)-\cC^{(k)}_j(\l_0)\big\|\leqslant C\eta(\ell).
\end{equation*}
Hence, in view of (\ref{5.26}) and by the definition of the matrix $\sA$, we infer that it satisfies the representation
\begin{equation}\label{3.6}
\sA(\l,\ell)=\sB(\ell)+\sA_1(\l,\ell),
\end{equation}
where
\begin{equation*}
\sB:=
\begin{pmatrix}
\sB_{11} & \ldots & \sB_{1n}
\\
\vdots && \vdots
\\
\sB_{n1} & \ldots & \sB_{nn}
\end{pmatrix}, \qquad
\sB_{rk}(\ell):=
\begin{pmatrix}
\cC_j^{(r)}(\l_0)\mathbf{\Phi}_{p}^{(k)}
\end{pmatrix}^{j=1,\ldots,N^{(r)}}_{p=1,\ldots,N^{(k)}},
\end{equation*}
and $\sA_1$ is some matrix holomorphic in $\l\in\overline{B_{c\eta(\ell)}(\l_0)}$ obeying the estimate:
\begin{equation}\label{3.7}
\|\sA_1(\l,\ell)\|\leqslant C\eta^2(\ell).
\end{equation}

According \cite[Ch. 5, Sect. 1.3]{NP}, the functions $\vp^{(k,\pm)}_{is}$ introduced in (\ref{2.25}) solve boundary value problems
\begin{equation}\label{5.44}
(\hat{\Op}_{per}^{(k)}-\l_0)\vp^{(k,\pm)}_{is}=0\quad\text{in}\quad \Pi,\qquad \vp^{(k,\pm)}_{is}=0\quad\text{on}\quad \p\Pi.
\end{equation}

We first prove an auxiliary lemma.

\begin{lemma}\label{lm5.8}
The identities
\begin{equation}\label{5.48}
K_{isqt}^{(k)}=\pm\lim\limits_{N\to\pm\infty} \int\limits_{\om} \left(\overline{\vp_{qt}^{(k,-)}}\frac{\p\vp_{is}^{(k,+)}}{\p\nu^{(k)}}
-\vp_{is}^{(k,+)} \overline{\frac{\p\vp_{qt}^{(k,-)}
}{\p\nu^{(k)}}}
\right)\Bigg|_{x_1=N T^{(k)}}\di x',
\end{equation}
hold, where the limits in the right hand sides
are finite and are independent of the choice of the sign in their definition.
\end{lemma}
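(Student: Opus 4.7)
The plan is to use Green's (Lagrange) identity for the formally self-adjoint operator $\hat\Op^{(k)}_{per}$, applied to the pair of solutions $u:=\vp^{(k,+)}_{is}$ and $v:=\vp^{(k,-)}_{qt}$ of $(\hat\Op^{(k)}_{per}-\l_0)\vp=0$ from (\ref{5.44}), and then to identify the resulting invariant cross-sectional flux with $K^{(k)}_{isqt}$ by expanding via (\ref{2.25}).

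First I would take an arbitrary slab $\Omega_{a,b}:=(a,b)\times\om$ and consider $\int_{\Omega_{a,b}}(\overline{v}\hat\Op^{(k)}_{per}u-u\overline{\hat\Op^{(k)}_{per}v})\di x$. Since both $u$ and $v$ satisfy the same eigenvalue equation with real $\l_0$, this volume integral vanishes. A direct integration by parts, exploiting the symmetry $A^{(k)}_{ij,per}=A^{(k)}_{ji,per}$ and the specific structure of the imaginary first-order terms of $\hat\Op^{(k)}_{per}$, rewrites the integrand as a pure divergence $\sum_i\p_i J_i$ with $J_1=-\sum_j A^{(k)}_{1j,per}(\overline{v}\p_j u-u\p_j\overline{v})+2\iu A^{(k)}_{1,per}u\overline{v}$. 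The Dirichlet condition on $\p\Pi$ kills the lateral flux through $\p\om\times(a,b)$, so the end-cap integrals at $x_1=a$ and $x_1=b$ must coincide. A direct check using the definition $\p/\p\nu^{(k)}=-\sum_i A^{(k)}_{1i,per}\p_i+\iu A^{(k)}_{1,per}$ shows that $J_1$ is exactly the integrand in the statement (\ref{5.48}). Hence the cross-sectional integral in (\ref{5.48}) is independent of the cross-section at which it is evaluated; in particular its value at $x_1=NT^{(k)}$ does not depend on $N$, which immediately yields the finiteness of both limits, their coincidence, and the sign-independence asserted.

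Second, I would identify this $b$-independent value with $K^{(k)}_{isqt}$ by evaluating it at $b=T^{(k)}$ using (\ref{2.25}). Substituting $\vp^{(k,\pm)}_{i',s'}=e^{\pm\iu\mathfrak{r}^{(k,\pm)}_{i'}x_1}\sum_{p=0}^{s'}\frac{(\iu x_1)^p}{p!}\Phi^{(k,\pm)}_{i',s'-p}$ and computing the conormal derivatives, one obtains three kinds of contributions: derivatives of the Jordan chain functions $\Phi^{(k,\pm)}_{\cdot,\cdot}$ themselves, which produce the principal pieces $\p\Phi^{(k,+)}_{is}/\p\nu^{(k)}$ and $\p\overline{\Phi^{(k,-)}_{qt}}/\p\nu^{(k)}$ of (\ref{5.46}); derivatives of the polynomial weights $(\iu x_1)^p/p!$, which produce precisely the correction terms $\iu A^{(k)}_{11,per}\Phi^{(k,+)}_{i,s-1}$ and $\iu A^{(k)}_{11,per}\overline{\Phi^{(k,-)}_{q,t-1}}$ of (\ref{5.46}); and derivatives of the exponential factors, which produce terms proportional to $\iu\mathfrak{r}^{(k,\pm)}_\cdot A^{(k)}_{11,per}$ times $\vp^{(k,\pm)}$. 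On forming the antisymmetric combination $\overline{v}\p_{\nu^{(k)}}u-u\overline{\p_{\nu^{(k)}}v}$ the $\iu A^{(k)}_{1,per}$ pieces enter with opposite signs and cancel; applying $T^{(k)}$-periodicity $\Phi^{(k,\pm)}_{\cdot,\cdot}(T^{(k)},x')=\Phi^{(k,\pm)}_{\cdot,\cdot}(0,x')$ together with the conventions $\Phi^{(k,\pm)}_{\cdot,-1}=0$, the surviving terms line up with the integrand of (\ref{5.46}) in the case $\mathfrak{r}^{(k,+)}_i=\mathfrak{r}^{(k,+)}_q$; in the complementary case $\mathfrak{r}^{(k,+)}_i\ne\mathfrak{r}^{(k,+)}_q$ the exponential prefactor $e^{\iu(\mathfrak{r}^{(k,+)}_i+\mathfrak{r}^{(k,+)}_q)NT^{(k)}}$ combined with the constancy in $N$ forces the value to vanish, recovering (\ref{5.45}).

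The principal obstacle will be the algebraic bookkeeping in the second step: tracking the several contributions arising from differentiating the exponential, polynomial, and periodic factors in (\ref{2.25}), handling complex conjugation of $\vp^{(k,-)}_{qt}$ carefully, and verifying that the $\iu A^{(k)}_{1,per}$ pieces cancel exactly while the $\iu A^{(k)}_{11,per}$ corrections assemble precisely into the Jordan-chain terms $\Phi^{(k,+)}_{i,s-1}$ and $\overline{\Phi^{(k,-)}_{q,t-1}}$ present in (\ref{5.46}).
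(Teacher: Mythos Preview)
Your approach is correct and essentially the same as the paper's. Your first step is in fact slightly cleaner: applying Green's identity over an \emph{arbitrary} slab $(a,b)\times\om$ shows directly that the cross-sectional flux is constant in $b$, whereas the paper first integrates over $\Pi_{mT^{(k)}}$ to get sign-independence and then introduces an auxiliary cutoff $\xi_3$ to prove that the integral at $x_1=mT^{(k)}$ is $m$-independent. Both arrive at the same conclusion, yours with less machinery.

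One point in your second step needs tightening. In the case $\mr_i^{(k,+)}=\mr_q^{(k,+)}$, evaluating the flux at the single section $x_1=T^{(k)}$ does not by itself yield (\ref{5.46}): substituting (\ref{2.25}) produces \emph{all} the polynomial terms $\sum_{p}\frac{(\iu T^{(k)})^p}{p!}\Phi^{(k,\pm)}_{\cdot,\cdot-p}$, not just the $p=0$ pieces that appear in (\ref{5.46}). What kills the higher-order terms is precisely the constancy-in-$N$ argument you already invoke for the case $\mr_i^{(k,+)}\ne\mr_q^{(k,+)}$: the flux at $x_1=NT^{(k)}$ is a polynomial in $N$ (the exponential prefactor being trivial here), and constancy forces it to equal its free coefficient, which is (\ref{5.46}). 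This is exactly how the paper closes the argument. Also recheck your bookkeeping: the $\iu A_{1,per}^{(k)}$ contributions in $\overline{v}\,\p_{\nu^{(k)}}u-u\,\overline{\p_{\nu^{(k)}}v}$ actually \emph{add} (to $2\iu A_{1,per}^{(k)}u\overline{v}$) rather than cancel, since complex conjugation flips the sign of $\iu A_{1,per}^{(k)}$ in the second term; and the combined exponential factor should be $e^{\iu(\mr_i^{(k,+)}-\overline{\mr_q^{(k,-)}})NT^{(k)}}=e^{\iu(\mr_i^{(k,+)}-\mr_q^{(k,+)})NT^{(k)}}$, with a minus rather than a plus.
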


\begin{proof}
We choose an  arbitrary natural  $m$ large enough and integrate twice by parts in the following integral:
\begin{align*}
0=\int\limits_{\Pi_{m T^{(k)}}} \overline{\vp_{qt}^{(k,-)}} \big(\hat{\Op}_{per}^{(k)}-\l_0\big) \vp_{is}^{(k,+)}\di x
=&
\int\limits_{\om} \left(\overline{\vp_{qt}^{(k,-)}}\frac{\p\vp_{is}^{(k,+)}}{\p\nu^{(k)}}
-\vp_{is}^{(k,+)} \overline{\frac{\p\vp_{qt}^{(k,-)}
}{\p\nu^{(k)}}}
\right)\Bigg|_{x_1=m T^{(k)}}\di x'
\\
&+ \int\limits_{\om} \left(\overline{\vp_{qt}^{(k,-)}}\frac{\p\vp_{is}^{(k,+)}}{\p\nu^{(k)}}
-\vp_{is}^{(k,+)} \overline{\frac{\p\vp_{qt}^{(k,-)}
}{\p\nu^{(k)}}}
\right)\Bigg|_{x_1=-m T^{(k)}}\di x'.
\end{align*}
This implies that if the limits in the right hand side in  (\ref{5.48}) exist, then they are independent of the choice of the sign.

Let  $\xi_3=\xi_3(x_1)$ be an infinitely differentiable cut-off function  equalling to one as $x_1>2$ and vanishing as $x_1\leqslant 1$.
By problems (\ref{5.44}), the integrand in the integral
\begin{equation*}
\int\limits_{\Pi_{m T^{(k)}}} \overline{\vp_{qt}^{(k,-)}} \big(\hat{\Op}_{per}^{(k)}-\l_0\big) \chi_3\vp_{is}^{(k,+)}\di x
\end{equation*}
is compactly supported and this is why the integral is independent on $m$ large enough. Integrating by parts and bearing in mind problems   (\ref{5.44}) and definition (\ref{2.22}) of the functions $\vp_{is}^{(k,\pm)}$, for sufficiently large $m$ we obtain:
\begin{equation}\label{5.47}
\begin{aligned}
\int\limits_{\Pi_{m T^{(k)}}} \overline{\vp_{qt}^{(k,-)}} \big(\hat{\Op}_{per}^{(k)}-\l_0\big) \xi_3\vp_{is}^{(k,+)}\di x
&=  \int\limits_{\om}  \left(\overline{\vp_{qt}^{(k,-)}}\frac{\p\vp_{is}^{(k,+)}}{\p\nu^{(k)}}
- \vp_{is}^{(k,+)}\overline{\frac{\p \vp_{qt}^{(k,-)}
}{\p\nu^{(k)}}}
\right)\Bigg|_{x_1=m T^{(k)}}\di x'
\\
&= e^{\iu m\big( \mr_{i}^{(k,+)}- \overline{\mr_{q}^{(k,-)}}\big)T^{(k)}
}
\int\limits_{\om} \Bigg(\overline{\tilde{\vp}_{qt}^{(k,-)}}\frac{\p\tilde{\vp}_{is}^{(k,+)}}{\p\nu^{(k)}}
- \tilde{\vp}_{is}^{(k,+)}\frac{\p\overline{\tilde{\vp}_{qt}^{(k,-)}
}}{\p\nu^{(k)}}
\\
&\hphantom{=,} + \iu\big(\mr_{i}^{(k,+)} -\overline{\mr_{q}^{(k,-)}}\big)
\tilde{\vp}_{is}^{(k,+)}\overline{\tilde{\vp}_{qt}^{(k,-)}}
\frac{\p x_1}{\p\nu^{(k)}}
\Bigg)\Bigg|_{x_1= m T^{(k)}}\di x'.
\end{aligned}
\end{equation}
By definition   (\ref{2.22}) of the functions $\tilde{\vp}_{is}^{(k,\pm)}$ and the periodicity of the functions $\Phi_{is}^{(k,\pm)}$, the latter integral in  (\ref{5.47}) depends polynomially on  $m$. We also recall that $\overline{\mr_{q}^{(k,-)}}=\mr_{q}^{(k,+)}$.

Let $\mr_{i}^{(k,+)}\ne\mr_{q}^{(k,+)}$. Then the exponent $e^{ m\iu( \mr_{i}^{(k,+)}-\overline{\mr_{q}^{(k,-)}})T^{(k)}}=e^{\iu m(\mr_{i}^{(k,+)}-\mr_{q}^{(k,+)}) T^{(k)}}$ oscillates in  $m$; at that, it can increases or decreases in  $m$. In this case, identities (\ref{5.47}) are possible for all sufficiently large $m$ only as all integrals in these identities vanish. This proves formulae (\ref{5.45}).

Let $\mr_{i}^{(k,+)}=\mr_{q}^{(k,+)}$. In this case the exponent in the latter integral in  (\ref{5.47}) disappears. The remaining integral is polynomial in  $m$ and identities (\ref{5.47}) are possible only as this polynomial degenerates into its free coefficient. It is easy confirm that this coefficient is exactly the right hand side in formula  (\ref{5.46}). The proof is complete.
\end{proof}

All components of the vector $\mathbf{\Phi}_p^{(k)}(\ell)$ are zero except those at  $(k-1)$th and $(k+1)$th positions.  Hence, by  definition (\ref{3.11}) of the functionals   $\cC_j^{(r)}$
we see immediately that
\begin{equation}\label{5.61}
\sB_{rk}(\ell)\equiv0\quad \text{as}\quad |r-k|\ne 1.
\end{equation}

\begin{lemma}\label{lm5.6b}
The matrices  $\sA_{k\pm1\,k}$ satisfy the representations
\begin{align}\label{5.41}
\sA_{k\pm1\,k}(\l_0,\ell)=\sAo_{k\pm1\,k}(\ell) + O\big(e^{-\g\la\ell\ra}\big),\quad \ell\to\infty,
\end{align}
where, we recall, the matrices $\sAo_{k\pm1\,k}(\ell)$ were defined in (\ref{5.55}).
\end{lemma}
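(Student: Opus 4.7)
The plan is to use the representation (\ref{3.6}), (\ref{3.7}) to reduce the claim to computing the ``frozen'' matrix $\sB_{k\pm1,k}(\ell)$ at $\l=\l_0$, and then to extract its leading behavior via Green's identity combined with the asymptotic expansion (\ref{2.22}) of the eigenfunctions $\phi_p^{(k)}$ and $\phi_j^{(k+1)}$. Concretely, (\ref{3.6}) and (\ref{3.7}) give $\sA_{k+1,k}(\l_0,\ell)=\sB_{k+1,k}(\ell)+O(\eta^2(\ell))$; since $\eta^2(\ell)=\|\ell\|^{2\vk}e^{-2\mr\la\ell\ra}$ and $\g\in(\mr,2\mr)$, the remainder is $O(e^{-\g\la\ell\ra})$, so it suffices to show $\sB_{k+1,k}(\ell)=\sAo_{k+1,k}(\ell)+O(e^{-\g\la\ell\ra})$, with the sub-diagonal block $\sA_{k,k+1}$ treated symmetrically.

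The $(j,p)$-entry of $\sB_{k+1,k}$ equals $\cP_j^{(k+1)}(\l_0)g$ with $g=\cT_-\mathcal{S}(X_\ell^{(k)}-X_\ell^{(k+1)})\phi_p^{(k)}$ on $\Om_-$ and zero on $\Om_+$. Unfolding (\ref{3.28}) and integrating by parts twice against $\phi_j^{(k+1)}$, the jump terms at $\om_0$ cancel the $f_0$- and $f_1$-contributions in (\ref{3.28}) by the very design of $\cP_j^{(k+1)}$, and the eigenvalue equation $\hat\Op^{(k+1)}\phi_j^{(k+1)}=\l_0\phi_j^{(k+1)}$ annihilates the volume integral at $\l=\l_0$. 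What survives is a Wronskian-type cross-section integral
\begin{equation*}
\cP_j^{(k+1)}(\l_0)g=\int_{\om}\Big(\overline{\phi_j^{(k+1)}}\tfrac{\p g}{\p\nu^{(k+1)}}-g\tfrac{\p\overline{\phi_j^{(k+1)}}}{\p\nu^{(k+1)}}\Big)\Big|_{x_1=-a}dx',
\end{equation*}
which, for large $\ell$, lies in the asymptotic region for both factors. Decompose $\phi_p^{(k)}(x_1+D,x')=\tilde\phi_p^{(k,+)}(x_1+D,x')+\hat\phi_p^{(k,+)}(x_1+D,x')$ with $D:=X_\ell^{(k+1)}-X_\ell^{(k)}$, and $\phi_j^{(k+1)}=\tilde\phi_j^{(k+1,-)}+\hat\phi_j^{(k+1,-)}$. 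The ``hat'' remainders belong to weighted spaces with exponential rate $\g\in(\mr,2\mr)$, while the ``tilde'' parts carry the leading $e^{-\mr\la\ell\ra}$-exponentials times polynomials in $\|\ell\|$. The tilde-hat and hat-hat contributions to the boundary integral are therefore $O(e^{-\g\la\ell\ra})$, leaving only the tilde-tilde term to evaluate.

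Substituting $\tilde\phi_p^{(k,+)}(y,x')=\sum_{i,s}\a_{pis}^{(k,+)}\vp_{is}^{(k,+)}(y,x')$ and $\tilde\phi_j^{(k+1,-)}(x)=\sum_{q,t}\a_{jqt}^{(k,-)}\vp_{qt}^{(k,-)}(x)$, and using the matching identity $A^{(k+1)}_{\natural,per}(x_1,x')=A^{(k)}_{\natural,per}(x_1+D,x')$ for $x_1\leqslant-a$ (which aligns the conormal derivative $\p/\p\nu^{(k+1)}$ at $x_1=-a$ with $\p/\p\nu^{(k)}$ at $x_1+D$), the shift factorisation behind (\ref{2.26}) pulls the exponential $e^{\iu\mr_i^{(k,+)}D}$ and the polynomial $\b_{pis}^{(k,+)}(D)$ out of $\vp_{is}^{(k,+)}(x_1+D,x')$. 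The remaining boundary integrals are Wronskians of $\vp_{is}^{(k,+)}$ and $\vp_{qt}^{(k,-)}$; since both solve $(\hat\Op_{per}^{(k)}-\l_0)\cdot=0$, Green's identity makes these Wronskians cross-section-independent, and by Lemma~\ref{lm5.8} they coincide with the constants $K^{(k)}_{isqt}$ (vanishing unless $\mr_i^{(k,+)}=\mr_q^{(k,+)}$ by (\ref{5.45})). Collecting the coefficients reproduces precisely the formula (\ref{5.55}) for $\cAo_{jp}^{(k,+)}$.

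The hard step is the algebraic bookkeeping in the last paragraph: the non-$T^{(k)}$-multiple component of $D$ produces polynomial corrections coming from evaluating the Jordan-chain profiles $\Phi_{is}^{(k,\pm)}$ at shifted arguments, and one must verify that these recombine, via the binomial expansion of $(\iu(x_1+D))^p$ and the definition of $\b_{pis}^{(k,+)}(T)$, into the exact combinatorics displayed in (\ref{5.55}); any residual discrepancy must then be absorbed into the $O(e^{-\g\la\ell\ra})$ error using the weighted decay of the $\hat\phi$ remainders.
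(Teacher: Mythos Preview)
Your overall strategy coincides with the paper's: reduce to the entries $\cC_j^{(k+1)}(\l_0)\mathbf{\Phi}_p^{(k)}$, rewrite the functional $\cP_j^{(k+1)}$ as a boundary Wronskian via integration by parts, insert the asymptotic expansions (\ref{2.22}), and identify the mode--mode Wronskians through Lemma~\ref{lm5.8}. The paper carries this out in a slightly different order: it first applies (\ref{2.26}) to the shifted function $\phi_p^{(k)}(\cdot+D,\cdot)$ to pull out the $\ell$-dependence, then evaluates $\cP_j^{(k+1)}$ on the bare modes $\vp_{is}^{(k,+)}$ by extending the integral to $(-mT^{(k)},0)\times\om$ and taking $m\to\infty$, and only at this limit does it replace $\phi_j^{(k+1)}$ by its expansion.

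There is, however, a genuine gap in your cross-term estimate. You split \emph{both} factors at the fixed section $x_1=-a$ and then assert that the tilde--hat and hat--hat pieces are $O(e^{-\g\la\ell\ra})$. For the term
\[
\int_\om\Big(\overline{\hat\phi_j^{(k+1,-)}}\,\tfrac{\p}{\p\nu^{(k)}}\tilde\phi_p^{(k,+)}(\cdot+D)-\tilde\phi_p^{(k,+)}(\cdot+D)\,\tfrac{\p}{\p\nu^{(k)}}\overline{\hat\phi_j^{(k+1,-)}}\Big)\Big|_{x_1=-a}dx'
\]
this is false as stated: at $x_1=-a$ the remainder $\hat\phi_j^{(k+1,-)}$ is $O(1)$ (its weighted decay only bites as $x_1\to-\infty$, not at the fixed endpoint), while $\tilde\phi_p^{(k,+)}(\cdot+D)$ is of order $\eta(\ell)$. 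The naive product is thus $O(\eta)$, the same size as the leading term you are trying to isolate. The weighted-space membership alone does not give you the claimed bound.

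The fix is precisely the device the paper uses and that you already invoke for the tilde--tilde piece: since for $x_1<-a$ both $\vp_{is}^{(k,+)}$ and $\hat\phi_j^{(k+1,-)}$ solve $(\hat\Op_{per}^{(k)}-\l_0)u=0$ (the latter because $\phi_j^{(k+1)}$ and $\tilde\phi_j^{(k+1,-)}$ do), their Wronskian is section-independent. Sliding to $x_1=-mT^{(k)}$ and letting $m\to\infty$, the mode $\vp_{is}^{(k,+)}$ grows like $e^{\mr mT^{(k)}}$ while $\hat\phi_j^{(k+1,-)}$ decays like $e^{-\g mT^{(k)}}$; since $\g>\mr$, the product tends to zero, so this cross Wronskian actually \emph{vanishes}. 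Equivalently, do not split $\phi_j^{(k+1)}$ at $x_1=-a$; first push the section to $-\infty$, and only then decompose. With this correction your argument goes through and is the same as the paper's.
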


\begin{proof}
The entries of the matrices $\sAo_{k\pm1\,k}$ are determined  by the  quantities  $\cC_j^{(k\pm1)}(\l_0) \mathbf{\Phi}_p^{(k)}(\ell)$. Let us find out their asymptotic behavior. By formulae  (\ref{2.22}), (\ref{2.26}) we obtain:
\begin{align*}
\cT_-\cS(X_\ell^{(k)}-X_\ell^{(k+1)})\phi_p^{(k)}
=&\sum\limits_{i=1}^{J^{(k)}}
e^{\iu\mathfrak{r}^{(k,+)}_{i} (X_\ell^{(k+1)}-X_\ell^{(k)})}
\sum\limits_{s=0}^{\vk^{(k)}_{i}-1} \b_{pis}^{(k,+)}(X_\ell^{(k+1)}-X_\ell^{(k)})
\cT_- \vp^{(k,+)}_{is}
\\
&+O\big(e^{-\g |X_\ell^{(k+1)}-X_\ell^{(k)}}|\big).
\end{align*}
In the same way we find
\begin{align*}
\cT_+\cS(X_\ell^{(k+1)}-X_\ell^{(k)})\phi_p^{(k+1)}
=&\sum\limits_{i=1}^{J^{(k)}}
e^{\iu\mathfrak{r}^{(k,-)}_{i} (X_\ell^{(k+1)}-X_\ell^{(k)})}
\sum\limits_{s=0}^{\vk^{(k)}_{i}-1} \b_{pis}^{(k,-)}(X_\ell^{(k+1)}-X_\ell^{(k)})
\cT_+\vp^{(k,-)}_{is}
\\
&
+O\big(e^{- \g |X_\ell^{(k+1)}-X_\ell^{(k)}|}\big).
\end{align*}
It follows from the obtained formulae, identities (\ref{2.25}), definition (\ref{3.11}) of the functional $\cC^{(r)}_j$, and Lemma~\ref{lm5.8} that
\begin{equation}\label{5.52}
\begin{aligned}
\cC_j^{(k+1)}(\l_0)\mathbf{\Phi}^{(k)}_{p,+}(\ell)=
\sum\limits_{i=1}^{J^{(k)}}
&e^{\iu\mathfrak{r}^{(k,+)}_{i} (X_\ell^{(k+1)}-X_\ell^{(k)})}
\sum\limits_{s=0}^{\vk^{(k)}_{i}-1} \b_{pis}^{(k,+)}(X_\ell^{(k+1)}-X_\ell^{(k)})
\\
&\cdot\cP_j^{(k+1)}\cF^{(k+1)}(\l_0)(\cT_+ \vp^{(k,+)}_{is}\oplus 0)
+O\big(e^{-\g |X_\ell^{(k+1)}-X_\ell^{(k)}|}\big),
\end{aligned}
\end{equation}
According (\ref{3.28}), the quantity $\cP_p^{(k+1)}\cF^{(k+1)}(\l_0)(\cT_+ \vp^{(k,+)}_{is}\oplus 0)$ is given by formula
\begin{align*}
\cP_j^{(k+1)}
(\l_0)(\cT_+ \vp^{(k,+)}_{is}\oplus 0)=&\int\limits_{\Om_-}  \overline{\phi_j^{(k+1)}}(\Op^{(k+1)}-\l_0) \vp^{(k,+)}_{is}\di x
\\
&+\int\limits_{\om_0} \left(
\overline{\phi_j^{(k+1)}}\frac{\p \vp^{(k,+)}_{is}}{\p\nu^{(k+1)}}- \vp^{(k,+)}_{is} \overline{\frac{\p \phi_p^{(k+1)} }{\p\nu^{(k+1)}}}\right)\di s.
\end{align*}
As $x_1<-a$, thanks to the definition of the function $\vp^{(k,+)}_{is}$, the equation holds
\begin{equation*}
(\Op^{(k+1)}-\l_0) \vp^{(k,+)}_{is}=(\Op^{(k+1)}_{per}-\l_0) \vp^{(k,+)}_{is}=0
\end{equation*}
and this is why we can integrate parts as in (\ref{5.47}):
\begin{align*}
\cP_j^{(k+1)}(\l_0)(\cT_+ \vp^{(k,+)}_{is}\oplus 0)=&\lim\limits_{m\to-\infty}\int\limits_{(-m T^{(k)}, 0)\times\om}  \overline{\phi_j^{(k+1)}}(\Op^{(k+1)}-\l_0) \vp^{(k,+)}_{is}\di x
\\
&+\int\limits_{\om_0} \left(
\overline{\phi_j^{(k+1)}}\frac{\p \vp^{(k,+)}_{is}}{\p\nu^{(k+1)}}- \vp^{(k,+)}_{is} \overline{\frac{\p \phi_j^{(k+1)} }{\p\nu^{(k+1)}}}\right)\di s
\\
=&\lim\limits_{m\to-\infty} \int\limits_{\om}  \left(\vp^{(k,+)}_{is} \overline{\frac{\p\phi_j^{(k+1)}}{\p\nu} }- \overline{\phi_j^{(k+1)}}\frac{\p\vp^{(k,+)}_{is}}{\p\nu}
\right)\Bigg|_{x_1=m T^{(k)}}\di x'
\\
= &\sum\limits_{q=1}^{J^{(k)}}\sum\limits_{t=0}^{\vk_{q}^{(k)}-1} \overline{\a_{jqt}^{(k,-)}}
\lim\limits_{m\to-\infty} \int\limits_{\om}  \left(\vp^{(k,+)}_{is} \overline{\frac{\p \vp_{qt}^{(k,-)}}{\p\nu} }- \overline{\vp_{qt}^{(k,-)}}\frac{\p\vp^{(k,+)}_{is}}{\p\nu}
\right)\Bigg|_{x_1=m T^{(k)}}\di x'
\\
= &\sum\limits_{q=1}^{J^{(k)}}\sum\limits_{t=0}^{\vk_{q}^{(k)}-1} \overline{\a_{jqt}^{(k,-)}}
K_{isqt}^{(k)}.
\end{align*}
Substituting these identities into relations (\ref{5.52}), we arrive at  a final formula:
\begin{equation*}
\cC_j^{(k+1)}(\l_0)\mathbf{\Phi}^{(k)}_{p,+}(\ell)=
\cAo_{jp}^{(k,+)}(\ell)
+
O\big(e^{-\g|X_\ell^{(k+1)}-X_\ell^{(k)}|}\big).
\end{equation*}
In the same way we confirm that
\begin{equation*}
\cP_j^{(k)}
(\l_0)
(0\oplus\cT_- \vp^{(k,-)}_{is})
= \sum\limits_{q=1}^{J^{(k)}}\sum\limits_{t=0}^{\vk_{q}^{(k)}-1} \overline{\a_{jqt}^{(k,+)}}
K_{qtis}^{(k)},
\end{equation*}
and
\begin{equation*}
\cC_j^{(k)}(\l_0)
\mathbf{\Phi}^{(k+1)}_{p,-}(\ell)=
\cAo_{jp}^{(k,-)}(\ell)
+O\big(e^{- \g  |X_\ell^{(k+1)}-X_\ell^{(k)}|}\big).
\end{equation*}
The obtained formulae yield  (\ref{5.41}), (\ref{5.55}). The proof is complete.
\end{proof}

\subsection{Asymptotics for the roots}\label{ss:asrt}

In this subsection we find the asymptotics for the roots of equation (\ref{5.25}). We begin with observing that identity (\ref{5.61}) and Lemma~\ref{lm5.6b} imply
\begin{equation*}
\sB(\ell)=\sAo(\ell)+O\big(e^{-\g\la\ell\ra}\big).
\end{equation*}
Then by (\ref{3.6}), (\ref{3.7}) we infer that
\begin{equation}\label{5.62}
\sA(\l,\ell)=\sAo(\ell)+\sA_2(\l,\ell),
\end{equation}
where $\sA_2(\l,\ell)$ is a holomorphic in $\l\in B_{c\eta(\ell)}(\l_0)$ matrix obeying the estimate
\begin{equation*}
\|\sA_2(\l,\ell)\|\leqslant C e^{-\g\la\ell\ra}.
\end{equation*}
Having this estimate and identity (\ref{5.62}) in mind as well as the fact that $\L_j$ are the eigenvalues of the matrix $\sAo$, we rewrite equation (\ref{5.25}) as
\begin{equation*}
\prod\limits_{j=1}^{N}\big(\l-\L_j(\ell)\big) + G(\l,\ell)=0,
\end{equation*}
where $G$ is a holomorphic in $\l\in B_{c\eta(\ell)}(\l_0)$ function obeying the estimate
\begin{equation}\label{5.64}
|G(\l,\ell)|\leqslant  C e^{-\g\la\ell\ra}\eta^{N-1}(\ell);
\end{equation}
the factor $\eta^{N-1}$ appears in the latter estimate since $|\l-\l_0|<c\eta(\ell)$.

It follows from Lemma~\ref{lm5.6b} that the matrix $\sAo$ is of order $O(\eta)$ and this estimate is order sharp. Hence, the same is true for its eigenvalues.

We choose a group $L_p$ and take one of the eigenvalues $\L_j$ in this group. We consider the ball $B_{2\tilde{c}\vt}(\L_j)$, where $\tilde{c}>1$ is some fixed constant to be chosen later, and
$\vt(\ell):=\eta^{1-\frac{1}{N}}(\ell)e^{-\frac{\g}{N}\la \ell\ra}$. We assume that the constant
$\tilde{c}$ is such that $\{\L_i\}_{i\in L_p}\subset B_{2\tilde{c}\vt}(\L_j)$ and the distances from $\L_j$ to the boundary $\p B_{2\tilde{c}\vt}(\L_j)$ is at least $\tilde{c}\vt$. Hence,
\begin{equation}\label{5.65}
\prod\limits_{i\in L_p} |\l-\L_i|\geqslant (\tilde{c}\vt)^{|L_p|}\quad\text{as}\quad \l\in\p B_{2\tilde{c}\vt}(\L_j).
\end{equation}
It follows from (\ref{2.32}) that for $\L_i\notin L_p$ a similar estimate holds:
\begin{equation}
\prod\limits_{i\notin L_p} |\l-\L_i|\geqslant \left(\frac{\mu\vt}{2}\right)^{N-|L_p|}\quad\text{as}\quad \l\in\p B_{2\tilde{c}\vt}(\L_j).
\end{equation}
This inequality and (\ref{5.65}) imply:
\begin{equation*}
\prod\limits_{i\in\{1,\ldots,N\}} |\l-\L_i|\geqslant \tilde{c}\vt^{N}=\tilde{c}
e^{-\g\la\ell\ra}\eta^{N-1}(\ell) \quad\text{as}\quad \l\in\p B_{2\tilde{c}\vt}(\L_j).
\end{equation*}
In view of this estimate and (\ref{5.64}) we see that choosing $\tilde{c}=2C$, we get
\begin{equation*}
\bigg|\prod\limits_{i\in\{1,\ldots,N\}} (\l-\L_i)\bigg|>|G(\l,\ell)|\quad\text{as}\quad \l\in\p B_{2\tilde{c}\vt}(\L_j)
\end{equation*}
and by Rouch\'e theorem, equation (\ref{5.25}) has exactly the same number of zeroes in $B_{2\tilde{c}\vt}(\L_j)$ as the function $\l\mapsto \prod\limits_{i\in\{1,\ldots,N\}} (\l-\L_i)$. The zeroes of the latter function in $B_{2\tilde{c}\vt}(\L_j)$ are exactly $\L_i$, $i\in L_p$. Hence, equation (\ref{5.25}) has the same number of roots counting their orders  in
$B_{2\tilde{c}\vt}(\L_j)$. All these roots satisfy   $|\l(\ell)-\L_j(\ell)|<2\tilde{c}\vt$ and this proves asymptotics (\ref{2.33}). In a general situation, $\l(\ell)$ is complex-valued  and in this case its imaginary part is negative, since otherwise the corresponding non-trivial solution to problem (\ref{2.14}) would be an eigenfunction associated with a complex-valued eigenvalue, what is impossible. If the root $\l(\ell)$ is real, then the associated non-trivial solution to problem (\ref{2.14}) can be an eigenfunction of the operator $\Op_\ell$ but this situation still fits our definition of the resonance.
The proof of Theorem~\ref{th2.2} is complete.

\section*{Acknowledgements}

The authors thank A.A. Fedotov and S.A. Nazarov for useful comments and discussion some aspects of the work.

The results presented in Sections~\ref{ss:reduc1},~\ref{ss:reduc2},~\ref{s:proof} were financially supported by Russian Science Foundation (grant no. 17-11-01004).

\end{document}